\newtheorem{thm}{Theorem}
\newtheorem{defn}{Definition}
\newtheorem{lemma}{Lemma}
\newtheorem{pro}{Proposition}
\newtheorem{rk}{Remark}
\newtheorem{ex}{Example}
\numberwithin{equation}{section} \setcounter{tocdepth}{1}
\newcommand{\M}{{\mathcal M}}
\newcommand{\bea}{\begin{eqnarray}}
\newcommand{\eea}{\end{eqnarray}}
\newcommand{\Q}{\mathbb{Q}}
\newcommand{\R}{\mathbb{R}}
\def\M{\mathcal M}
\def\R{\mathbb{R}}
\begin{document}
\title [Markov processes of cubic stochastic matrices]
{Markov processes of cubic stochastic matrices:\\ {\it Quadratic stochastic processes}}

\author {J.M. Casas, M. Ladra, U.A. Rozikov}

\address{J.M.~Casas\\ Departamento Matem\'atica Aplicada I,
Universidade de Vigo,
E. E. Forestal, Campus Universitario A Xunqueira, 36005 Pontevedra, Spain.}
 \email {jmcasas@uvigo.es}

\address{M.~Ladra\\ Departamento de  Matem\'aticas, University of Santiago de Compostela, 15782, Spain.}
 \email {manuel.ladra@usc.es}

 \address{U.A.~Rozikov \\ Institute of Mathematics, 29, Do'rmon Yo'li str., 100125,
Tashkent, Uzbekistan.} \email {rozikovu@yandex.ru}

\begin{abstract} We consider Markov processes of cubic
stochastic (in a fixed sense) matrices which
are also called  quadratic stochastic process (QSPs).
A QSP is a particular case of a continuous-time dynamical system
whose states are stochastic cubic matrices
satisfying an analogue of the Kolmogorov-Chapman equation (KCE).
Since there are several kinds of multiplications between
cubic matrices we have to fix first a multiplication
 and then consider the KCE with respect
to the fixed multiplication. Moreover, the notion of stochastic
cubic matrix also varies depending on the real
models of application. The existence of a stochastic (at each time)
solution to the KCE provides
the existence of a QSP.
In this paper, our aim is to construct QSPs
for two specially chosen notions of stochastic cubic matrices
and two multiplications of such matrices (known as Maksimov's multiplications).
We construct a wide class of QSPs and
give some time-dependent behavior of such
processes. We give an example with  applications to the Biology, constructing a QSP which
describes the time behavior (dynamics) of a population with
the possibility of twin births.
\end{abstract}

\subjclass[2010] {17D92; 17D99; 60J27}

\keywords{quadratic stochastic process; cubic matrix; time; Kolmogorov-Chapman
equation}

\maketitle

\section{Introduction}

A Markov process is a random process indexed by time, in which the future is independent of the past, given the present. Thus, Markov processes are the natural stochastic analogs of the deterministic processes described by differential and difference equations. They form one of the most important classes of random processes.
If the time space is $T=[0,\infty)$ and the state space is discrete,
then Markov processes are known as continuous-time Markov chains.

The Kolmogorov-Chapman equation (KCE) gives the fundamental
relationship between the probability transitions (kernels).
Namely, it is known that (see e.g. \cite{SK}) if
each element of a family of matrices satisfying the KCE
is stochastic, then it generates a Markov process.

There are many random processes which can not be described by Markov processes
of square stochastic matrices (see for example \cite{D,ht,K,Mak}).

To have non-Markov process one can consider a solution of the KCE which is
not stochastic for some time as in \cite{CLR,ORT,RM1},
where a chain of evolution algebras (CEA) is introduced and investigated.
Later, this notion of CEA was generalized in \cite{LR},
where a concept of flow of arbitrary finite-dimensional algebras (i.e.
their matrices of structural constants are \textsl{cubic} matrices) is introduced.

By Maksimov \cite{Mak} some associative multiplication rules of cubic matrices
as well as cubic analogues of stochastic or doubly stochastic square
matrices are introduced, for which he suggests several possible
probability interpretations.
Moreover,  the concept of a Markov interaction process (MIP) is defined.
It is shown that there exists a one-to-one correspondence between the
transition matrices defining a MIP and the stochastic cubic matrices
of a certain kind.

In this paper we study Markov process of cubic matrices,
which is a two-parametric family of
cubic stochastic matrices (we fix a notion of stochastic
 matrix and fix a multiplication rule of cubic matrices) satisfying the KCE.

The paper is organized as follows. In Section~\ref{S:definitions} we give the main definitions related to
Markov processes, cubic matrices, several kinds of multiplications of cubic matrices
and Markov processes of cubic matrices which are also called  quadratic stochastic processes (QSPs).
In Section~\ref{Se3} we describe all QSPs of type $(3|0)$, these are solutions of the KCE
in the class of $3$-stochastic cubic matrices, with respect to the Maksimov's $0$-multiplication
(see Section~\ref{S:definitions} for definitions).
In Section~\ref{Se4} we construct some QSPs of type $(12|a_0)$, these are solutions of the KCE
in the class of $(1,2)$-stochastic cubic matrices, with respect to the Maksimov's $a_0$-multiplication.
In Section~\ref{bio} we give an application of a QSP of type $(12|a_0)$ to a population
with a possibility of twins birth.
For several QSPs we study time-dependent behavior of the processes.

\section{Preliminaries}\label{S:definitions}
\subsection{Markov process of square matrices}\label{ss1} Let us recall first the notion of Markov process for square
stochastic matrices. This will be useful to compare with Markov processes of cubic matrices.

A square matrix $\mathcal U=(U_{ij})_{i,j=1}^m$ is called \emph{right stochastic}
if
\[U_{ij}\geq 0, \quad  \forall i,j=1,\dots,m; \qquad \sum_{j=1}^mU_{ij}=1, \ \ \forall i=1,\dots,m.\]

Similarly one can define a \emph{left stochastic} matrix being a non-negative real square matrix, with each column summing to 1 and
a \emph{doubly stochastic} matrix being a square matrix of non-negative real numbers with each row and column summing to 1.

A family of stochastic matrices $\{\mathcal U^{[s,t]}: s,t\geq 0\}$
is called a \emph{Markov process} if it satisfies the Kolmogorov-Chapman equation:
\begin{equation}\label{KC0}
\mathcal U^{[s,t]}=\mathcal U^{[s,\tau]}\mathcal U^{[\tau,t]}, \qquad \text{for all} \ \ 0\leq s<\tau<t.
\end{equation}
 Let $I=\{1,2,\dots,m\}$. A \emph{distribution} (or \emph{state}) of the set $I$  is a probability measure
 $x=(x_1,\dots,x_m)$, where $x_i$ is a probability of $i\in I$. The set of all such vectors is called a simplex and denoted by
\[S^{m-1}=\left\{x\in\mathbb R^m: x_i\geq 0, \ \sum_{i=1}^mx_i=1\right\}.\]
Let $x^{(0)}=(x_1^{(0)}, \dots, x_m^{(0)})\in S^{m-1}$ be an initial distribution on $I$.
Denote by $x^{(t)}=(x_1^{(t)}, \dots, x_m^{(t)})\in S^{m-1}$ the distribution of the system at the moment $t$.
For arbitrary moments of time $s$ and $t$ with $s<t$ the matrix $\mathcal U^{[s,t]}=\left(U^{[s,t]}_{ij}\right)$ gives the transition probabilities
from the distribution $x^{(s)}$ to the distribution $x^{(t)}$. Moreover $x^{(t)}$ depends linearly from $x^{(s)}$:
\[x^{(t)}_k=\sum_{i=1}^mU_{ik}^{[s,t]}x^{(s)}_i, \qquad k=1,\dots,m.\]
A Markov chain is a type of Markov process that has either discrete state space or discrete time,
but the precise definition of a Markov chain varies (see e.g. \cite{A,Sh,SK} for the theory of Markov process).
\subsection{Cubic matrices}
We consider a cubic matrix $Q=(q_{ijk})_{i,j,k=1}^m$ as a $m^3$-dimensional vector, i.e. an element of $\mathbb R^{m^3}$,
which can be uniquely written as
\[Q=\sum_{i,j,k=1}^m   q_{ijk}E_{ijk},\]
where $E_{ijk}$ denotes the cubic unit (basis) matrix, i.e. $E_{ijk}$ is a $m^3$- cubic matrix whose
$(i,j,k)$th entry is equal to 1 and all the other entries are equal to 0.

Denoting $Q_i=(q_{ijk})_{j,k=1}^m$ we can write the cubic matrix $Q$ in the following form
\[Q=(Q_1| Q_2 |\dots |Q_m).\]

Denote by $\mathfrak C$ the set of all cubic matrices over a field $F$. Then $\mathfrak C$ is an
$m^3$-dimensional vector space over $F$, i.e. for any matrices $A=(a_{ijk})$, $B=(b_{ijk})\in \mathfrak C$, $\lambda\in F$, we have
\[ A+B \coloneqq (a_{ijk}+b_{ijk})\in \mathfrak C, \qquad  \lambda A \coloneqq (\lambda a_{ijk})\in \mathfrak C. \]
In general, one can fix an $m^3\times m^3\times m^3$- cubic matrix $\mu=\left(C_{ijk,lnr}^{uvw}\right)$ as a matrix of structural constants and   give a \emph{multiplication} of basis cubic matrices as
\begin{equation}\label{uk}
E_{ijk}*_\mu E_{lnr}=\sum_{uvw}C_{ijk,lnr}^{uvw}E_{uvw}.
\end{equation}
Then the extension of this multiplication by bilinearity to arbitrary cubic matrices gives a general
multiplication on the set $\mathfrak C$ and it becomes an algebra of cubic matrices (ACM), denoted by $\mathfrak C_\mu$ (see \cite{LRl} for some basic properties of ACM).
Under known conditions (see \cite{Ja}) on structural constants one can make this general ACM as a commutative or/and associative algebra, etc.

\subsection{Maksimov's multiplications} Introduce some simple versions of multiplications \eqref{uk}.
Denote $I=\{1,2,\dots,m\}$.

Following  \cite{Mak} define the following multiplications for basis matrices $E_{ijk}$:
\begin{equation}\label{ma0}
 E_{ijk}*_0 E_{lnr}=\delta_{kl}\delta_{jn}E_{ijr}.
 \end{equation}
Then for any two cubic matrices $A=(a_{ijk}), B=(b_{ijk})\in \mathfrak C$
 the matrix $A*_0 B=(c_{ijk})$ is defined by
 \begin{equation}\label{AB0}
 c_{ijr}=\sum_{k=1}^m a_{ijk}b_{kjr}.
 \end{equation}
Consider also
\begin{equation}\label{ma}
 E_{ijk}*_a E_{lnr}=\delta_{kl}E_{ia(j,n)r},
 \end{equation}
  where $a \colon   I\times I\to I$, $ (j,n) \mapsto a(j,n) \in I$,  is an arbitrary associative binary operation and
  $\delta_{kl}$ is the Kronecker symbol.
  Note that \eqref{ma0} is  \textsl{not} a particular case of \eqref{ma}.

  Denote by $\mathcal O_m$ the set of all associative binary operations on $I$.

 The general formula for the multiplication is the extension of \eqref{ma} by bilinearity, i.e.
 for any two cubic matrices $A=(a_{ijk}), B=(b_{ijk})\in \mathfrak C$
 the matrix $A*_a B=(c_{ijk})$ is defined by
\[
 c_{ijr}=\sum_{l,n: \, a(l,n)=j}\sum_k a_{ilk}b_{knr}.
\]

 Denote by $\mathfrak C_a\equiv \mathfrak C_a^m=(\mathfrak C, *_a) $, $a\in\mathcal O_m$,
 the ACM given by the multiplication $*_a$.

\subsection{Markov process as a quadratic stochastic process}
Following \cite{LR} we define a quadratic stochastic process.

Define several kinds of cubic stochastic matrices (see \cite{Mak,MG}):
a cubic matrix $P=(p_{ijk})_{i,j,k=1}^m$ is called
\begin{itemize}
\item[] $(1,2)$-\emph{stochastic} if
\[p_{ijk}\geq 0, \qquad  \sum_{i,j=1}^mp_{ijk}=1, \ \ \text{for all} \ k.\]

\item[] $(1,3)$-\emph{stochastic} if
\[p_{ijk}\geq 0, \qquad  \sum_{i,k=1}^mp_{ijk}=1, \ \ \text{for all} \ j.\]

\item[] $(2,3)$-\emph{stochastic} if
\[p_{ijk}\geq 0, \qquad  \sum_{j,k=1}^mp_{ijk}=1, \ \ \text{for all} \ i.\]

\item[] $3$-\emph{stochastic} if
\[p_{ijk}\geq 0, \qquad  \sum_{k=1}^mp_{ijk}=1, \ \ \text{for all} \  i,j.\]
The last one can be also given with respect to first and second index.
\end{itemize}

Maksimov \cite{Mak} also defined a twice stochastic matrix:
a (2,3)-stochastic cubic matrix is called \emph{twice stochastic} if
\[\sum_{i=1}^m p_{ijk}=\frac{1}{m}, \qquad \text{for all} \ \ j,k.\]
Denote by $\mathcal S$ the set of all possible kinds of stochasticity and
denote by $\mathbb M$ the set
of all possible multiplication rules of cubic matrices.

Let parameters $s\geq 0$, $t\geq 0$, are considered as time.

Denote by
$\M^{[s,t]}=\left(P_{ijk}^{[s,t]}\right)_{i,j,k=1}^{\ \underset{m}{}}$ a cubic matrix with two parameters.
\begin{defn}[\cite{LR}]
A family $\{\M^{[s,t]}: \ s,t\in\R_+\}$ is called a Markov process of cubic matrices (or a quadratic stochastic process (QSP))
of type $(\sigma|\mu)$ if for each time $s$ and $t$ the cubic matrix
$\M^{[s,t]}$ is stochastic in sense $\sigma\in\mathcal S$ and satisfies the Kolmogorov-Chapman equation
(for cubic matrices):
\begin{equation}\label{KC}
\M^{[s,t]}=\M^{[s,\tau]}*_\mu\M^{[\tau,t]}, \qquad \text{for all} \ \ 0\leq s<\tau<t.
\end{equation} with respect to the multiplication $\mu\in \mathbb M$.
\end{defn}

We note that this definition of QSP gives an alternative of   \cite[Definition 3.1.1]{MG} and a natural
generalization of the Markov process of Subsection~\ref{ss1}.

In \cite{LR} using the QSPs some flows of finite-dimensional algebras are determined and investigated.

\subsection{Motivations and interpretations}
QSPs arise naturally in the study of biological and physical systems with interactions.
Indeed, assume a particle of type $i$ and a particle of type $j$ have interaction at time $s$, as an interaction process, then
with probability $P_{ijk}^{[s,t]}$ a particle of type $k$ appears at time $t$. The Kolmogorov-Chapman equation \eqref{KC} gives
the time-dependent evolution law of the interacting process (dynamical system).

Let $x^{(0)}=(x_1^{(0)}, \dots, x_m^{(0)})\in S^{m-1}$ be an initial distribution on $I$.

Denote by $x^{(t)}=(x_1^{(t)}, \dots, x_m^{(t)})\in S^{m-1}$ the distribution of the system at the moment $t$.
For arbitrary moments of time $s$ and $t$, with $s<t$, the matrix $\M^{[s,t]}$ gives the transition probabilities
from the distribution $x^{(s)}$ to the distribution $x^{(t)}$.

Since we should have $x^{(t)}\in S^{m-1}$, one can consider the following models:
\begin{itemize}
\item[-] Consider $P_{ijk}^{[s,t]}$ as the conditional
probability $P^{[s,t]}(k|i, j)$ that $i$th and $j$th particles (physics) or species (biology)
interbred successfully at time $s$, then they produce
an individual $k$ at time $t$.

Assume the ``parents'' $ij$ are independent
for any moment of time $s$ and the matrix $\left(P_{ijk}^{[s,t]}\right)$ is
3-stochastic, then the probability distribution $x^{(t)}$
can be found by the total probability as
\[
x^{(t)}_k=\sum_{i,j=1}^mP_{ijk}^{[s,t]}x^{(s)}_ix^{(s)}_j, \qquad  k=1, \dots, m, \ \ 0\leq s<t.
\]
For $1-$stochastic and $2-$stochastic it can be defined similarly, by replacing the corresponding indices.

\item[-] Consider now a physical (biological, chemical) system where
there are $m$ types of ``particles'' or molecules, the set of types is denoted by $I=\{1,\dots,m\}$,
 and each particle may split to two new ones
 having types from $I$. Consider $P_{ijk}^{[s,t]}$ as the conditional
probability $P^{[s,t]}(i,j|k)$ that a particle of type $k$ starts splitting
at time $s$ and finishes splitting at time $t$ and the result is
two particles with $i$th and $j$th types.
For a biological model see Section~\ref{bio}.

Assume  $\left(P_{ijk}^{[s,t]}\right)$ is (1,2)-stochastic then $x^{(t)}$ can be defined by
\begin{equation}\label{xt12}
x^{(t)}_k= \frac{1}{2} \sum_{i,j=1}^m\left(P_{kij}^{[s,t]}+P_{ikj}^{[s,t]}\right)x^{(s)}_j, \qquad k=1, \dots, m, \ \ 0\leq s<t.
\end{equation}
For (1,3)-stochastic and (2,3)-stochastic cases one can define similarly by replacing the indices.
\end{itemize}

Thus finding $P_{ijk}^{[s,t]}$ from the equation \eqref{KC} (at a fixed $(\sigma,\mu)$) and studying the time-dependent behavior
of $P_{ijk}^{[s,t]}$ we can describe the time-dependent evolution of $x^{(t)}$.

\begin{defn}
 A QSP is called a (time) homogenous if
the matrix $\M^{[s,t]}$ depends only on $t-s$. In this case we write
$\M^{[t-s]}$.
\end{defn}
\begin{defn}
 A QSP is called periodic if
the matrix $\M^{[s,t]}$ depends on time $s$ or/and $t$  periodically, i.e.
periodicity with respect to $s$ (resp. $t$): there is $S>0$ (resp. $T>0$), such that
$\M^{[s+S,t]}=\M^{[s,t]}$ for all $0\leq s<s+S<t$ (resp. $\M^{[s,t+T]}=\M^{[s,t]}$, for all $0\leq s<t$).
\end{defn}

\subsection{Our aim}
To construct a QSP of type $(\sigma, \mu)$ one has to solve \eqref{KC}. In this paper our aim is to study QSPs, for the
following two cases
\begin{itemize}
\item[-]$\sigma=3$-stochastic and $\mu$ is the Maksimov's multiplication $\mu=0$ given by \eqref{ma0}.
 Call this QSP of type $(3|0)$.
Under these conditions the equation \eqref{KC} has the following form
\begin{equation}\label{e0}
P_{ijr}^{[s,t]}=\sum_{k=1}^m P^{[s,\tau]}_{ijk}P_{kjr}^{[\tau,t]}, \quad \forall i,j,r\in I,
\end{equation}
where
\begin{equation}\label{e3}
P_{ijk}^{[s,t]}\geq 0, \ \ \forall i,j,k\in I; \qquad  \sum_{k=1}^nP_{ijk}^{[s,t]}=1, \ \ \text{for all} \ \ i,j\in I, \ \ 0\leq s<t.
\end{equation}
Thus a QSP of type $(3|0)$ is a solution to the system \eqref{e0} and \eqref{e3}.

\item[-]  $\sigma=(1,2)$-stochastic and $\mu$ is the Maksimov's multiplication
with the operation $a=a_0$ such that $a_0(i,j)=i$ for any $i,j\in I$.
Call this QSP of type $(12|a_0)$.
Under these conditions the equation \eqref{KC} has the following form
\begin{equation}\label{eD}
P_{ijr}^{[s,t]}=\sum_{k,n=1}^m P^{[s,\tau]}_{ijk}P_{knr}^{[\tau,t]}, \quad \forall i,j,r\in I,
\end{equation}
where
\begin{equation}\label{e12}
P_{ijr}^{[s,t]}\geq 0, \ \ \forall i,j,r\in I; \qquad \sum_{i,j=1}^mP_{ijr}^{[s,t]}=1, \ \ \forall r\in I, \ \ 0\leq s<t.
\end{equation}
Hence a QSP of type $(12|a_0)$ is a solution to the system \eqref{eD} and \eqref{e12}.
\end{itemize}

\section{QSPs of type $(3|0)$}\label{Se3}

  For the multiplication \eqref{AB0} it is easy to see that if two cubic matrices, say $A$ and $B$, are 3-stochastic then
  their multiplication $A*_0B$ is  3-stochastic too.

  Let $\mathcal M_j^{[s,t]}=(P^{[s,t]}_{ijk})_{i,k=1}^m$ be the $j$th layer of the matrix $\mathcal M^{[s,t]}$.
  The following proposition characterizes all QSPs of type (3|0).

\begin{pro}[\cite{LR}]\label{pm} Any solution of the equation \eqref{KC} for the multiplication \eqref{ma0} is
a direct sum of solutions of the following $m$ independent equations:
\[
\M_j^{[s,t]}=\M_j^{[s,\tau]}\M_j^{[\tau,t]}, \qquad \text{for all} \quad 0\leq s<\tau<t, \quad j=1,\dots,m.
\]
\end{pro}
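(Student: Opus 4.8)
The plan is to exploit the very simple index structure of Maksimov's $0$-multiplication. Formula \eqref{AB0} shows that in the product $A*_0B$ the middle index $j$ is never changed and that entries carrying distinct middle indices never interact. Writing $A_j=(a_{ijk})_{i,k=1}^m$ for the $j$th layer of $A$ regarded as an ordinary $m\times m$ square matrix, \eqref{AB0} says precisely that
\[
(A*_0B)_j=A_jB_j,\qquad j=1,\dots,m,
\]
the usual product of square matrices, computed independently in each layer. Equivalently, the ACM $(\mathfrak C,*_0)$ is the direct sum of the subspaces $\mathfrak C^{(j)}=\spa\{E_{ijk}:i,k\in I\}$, each of which is a two-sided ideal closed under $*_0$ and isomorphic, as an algebra, to the full matrix algebra $M_m(F)$ via $E_{ijk}\mapsto E_{ik}$; indeed $E_{ijk}*_0E_{lnr}=\delta_{kl}\delta_{jn}E_{ijr}$ vanishes unless $n=j$.

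Next I would substitute a two-parameter family $\{\M^{[s,t]}\}$ into the Kolmogorov--Chapman equation \eqref{KC} for $\mu=0$ and read it off layer by layer. Using $(A*_0B)_j=A_jB_j$, equation \eqref{KC} becomes exactly the $m$ decoupled equations
\[
\M_j^{[s,t]}=\M_j^{[s,\tau]}\M_j^{[\tau,t]},\qquad 0\le s<\tau<t,\quad j=1,\dots,m,
\]
which are the classical Chapman--Kolmogorov equations for square matrices. Conversely, any choice of $m$ families $\{\M_j^{[s,t]}\}$ solving these equations reassembles layer by layer into a single cubic matrix $\M^{[s,t]}$ solving \eqref{KC}. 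Moreover, as already noted just before the proposition, $3$-stochasticity of $\M^{[s,t]}$ is equivalent to each $\M_j^{[s,t]}$ being a right stochastic $m\times m$ matrix, and this property is preserved under $*_0$; hence the correspondence restricts to a bijection between QSPs of type $(3|0)$ and $m$-tuples of Markov processes of square stochastic matrices. This is the asserted direct-sum decomposition.

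The argument is essentially pure bookkeeping of indices, so there is no genuine analytic or algebraic obstacle. The only point that requires a little care is to fix the meaning of \emph{direct sum}: one has to observe that a two-parameter family valued in $\mathfrak C_0=\bigoplus_{j}\mathfrak C^{(j)}$ is literally the same datum as an $m$-tuple of two-parameter families valued in the factors $\mathfrak C^{(j)}\cong M_m(F)$, and then to verify that the $*_0$-product (and the $3$-stochasticity constraint) respects this splitting --- both of which are immediate from \eqref{AB0}. Everything else is a direct translation of the classical theory of Subsection~\ref{ss1} applied in each layer separately.
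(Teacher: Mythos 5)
Your proposal is correct, and it is essentially the intended argument: the paper itself only quotes this proposition from \cite{LR} without reproducing a proof, and the identity $(A*_0B)_j=A_jB_j$ read off from \eqref{AB0} is precisely the layer-wise decoupling on which the surrounding text (Lemma~\ref{l1} and Theorem~\ref{t1}) relies. Your extra remarks that each layer subspace is an ideal isomorphic to $M_m(F)$ and that $3$-stochasticity splits layer by layer are accurate and just make explicit what ``direct sum'' means here.
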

The following lemma is obvious
\begin{lemma}\label{l1} The matrix $\M^{[s,t]}$ is 3-stochastic if and only if the square matrix $\M^{[s,t]}_j$ is right stochastic for any $j=1,\dots,m$.
\end{lemma}
As corollary of Proposition~\ref{pm} and Lemma~\ref{l1} we have the following.
\begin{thm}\label{t1} Any QSP of type (3|0) is a direct sum of $m$ right stochastic square matrices
 $\mathcal M_j^{[s,t]}=(P^{[s,t]}_{ijk})_{i,k=1}^m$ satisfying  equation \eqref{KC0}.
 Consequently, any QSP of type (3|0) consists $m$ independent collection of usual Markov processes (see Subsection~\ref{ss1}).
\end{thm}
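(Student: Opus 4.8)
The plan is to simply assemble the two ingredients already in hand: Proposition~\ref{pm}, which says that solving the Kolmogorov--Chapman equation \eqref{KC} for the $*_0$-multiplication is equivalent to solving $m$ decoupled matrix Kolmogorov--Chapman equations \eqref{KC0}, one per layer $\M_j^{[s,t]}$; and Lemma~\ref{l1}, which translates the $3$-stochasticity of $\M^{[s,t]}$ into the right-stochasticity of each layer $\M_j^{[s,t]}$. Since the notion of QSP of type $(3|0)$ is, by definition, a two-parameter family that is $3$-stochastic at each pair $(s,t)$ and satisfies \eqref{KC} (equivalently the explicit system \eqref{e0}--\eqref{e3}), combining the two results should give the claim immediately.

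Concretely, first I would observe that the entries of the layer $\M_j^{[s,t]}$ do not interact with those of $\M_{j'}^{[s,t]}$ for $j'\neq j$ under $*_0$: this is exactly the content of \eqref{AB0}, where $c_{ijr}$ depends only on the $a_{ijk}$ and $b_{kjr}$ with the same middle index $j$. Hence a family $\{\M^{[s,t]}\}$ satisfies \eqref{KC} if and only if, for every fixed $j$, the family of square matrices $\{\M_j^{[s,t]}\}$ satisfies \eqref{KC0} — this is precisely Proposition~\ref{pm}. Next I would invoke Lemma~\ref{l1} to record that the stochasticity constraint likewise decouples: $\M^{[s,t]}$ is $3$-stochastic for all $s<t$ iff each $\M_j^{[s,t]}$ is right stochastic for all $s<t$. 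Putting these together, a QSP of type $(3|0)$ is the same datum as an $m$-tuple of families $\{\M_j^{[s,t]}\}$, each of which is a family of right-stochastic square matrices obeying \eqref{KC0}, i.e. (by the discussion of Subsection~\ref{ss1}) an ordinary Markov process. The direct-sum statement then follows by reassembling the layers, and the independence is just the absence of cross terms noted above.

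There is essentially no obstacle here: the theorem is explicitly flagged as a corollary of Proposition~\ref{pm} and Lemma~\ref{l1}, and both of those are either cited from \cite{LR} or called ``obvious.'' The only point requiring a word of care is the bookkeeping that the ``direct sum of square matrices'' is literally the reconstruction $\M^{[s,t]}=(\M_1^{[s,t]}\mid\cdots\mid\M_m^{[s,t]})$ in the layer notation $\M^{[s,t]}=(Q_1\mid\cdots\mid Q_m)$ introduced for cubic matrices, so that every $3$-stochastic cubic matrix arises this way and every choice of right-stochastic layers yields a valid $3$-stochastic cubic matrix. I would state that correspondence explicitly once, then conclude.

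\begin{proof}
By Proposition~\ref{pm}, a family $\{\M^{[s,t]}:s,t\in\R_+\}$ satisfies the Kolmogorov--Chapman equation \eqref{KC} for the multiplication \eqref{ma0} if and only if, for each $j=1,\dots,m$, the family of square matrices $\{\M_j^{[s,t]}=(P^{[s,t]}_{ijk})_{i,k=1}^m\}$ satisfies \eqref{KC0}; indeed, by \eqref{AB0} the layer $\M_j^{[s,\tau]}*_0\M_j^{[\tau,t]}$ involves only entries with middle index $j$, so the $m$ layers evolve independently. By Lemma~\ref{l1}, the cubic matrix $\M^{[s,t]}$ is $3$-stochastic if and only if every layer $\M_j^{[s,t]}$ is right stochastic. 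Hence a QSP of type $(3|0)$ is precisely the datum of $m$ families $\{\M_j^{[s,t]}\}$, $j=1,\dots,m$, each consisting of right stochastic square matrices and satisfying \eqref{KC0}, glued back together via the layer decomposition $\M^{[s,t]}=(\M_1^{[s,t]}\mid\cdots\mid\M_m^{[s,t]})$. Each such family $\{\M_j^{[s,t]}\}$ is, by the definition recalled in Subsection~\ref{ss1}, an ordinary (square) Markov process, and the $m$ of them are mutually independent since no cross terms between different values of $j$ occur in \eqref{AB0}. This proves the theorem.
\end{proof}
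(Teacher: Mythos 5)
Your proof is correct and follows exactly the route the paper intends: Theorem~\ref{t1} is stated as an immediate corollary of Proposition~\ref{pm} (layer-wise decoupling of the Kolmogorov--Chapman equation under $*_0$) and Lemma~\ref{l1} ($3$-stochasticity of the cubic matrix being equivalent to right-stochasticity of each layer), which is precisely what you assemble. The only addition you make beyond the paper's one-line deduction is the explicit bookkeeping of the layer decomposition, which is harmless and correct.
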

The independence mentioned in Theorem~\ref{t1} allows us to say that the QSPs of type (3|0) are not interesting,
because the basic theory of Markov process of square matrices is well developed.

Here we give examples of QSPs of type (3|0). This example also will be used to construct QSPs of type $(12|a_0)$.
\begin{ex}\label{ex1} In \cite{RM} to construct chains of some algebras, for $m=2$,
 a wide class of solutions of \eqref{KC0} is presented,
many of them are non-stochastic matrices, in general.
Here we list the following families of (left, right, doubly) {\rm stochastic}
square matrices (see \cite{RM}), which
satisfy the equation \eqref{KC0}, i.e. they generate independently interesting Markov processes:
\begin{align*}
 \Q_1^{[s,t]}&=\begin{pmatrix}
g(s)& g(s)\\[2mm]
1-g(s) &1-g(s)\\[2mm]
\end{pmatrix}, \  \text{where} \ \ g(s)\in [0,1] \  \text{is an arbitrary function};\\
\Q_2^{[s,t]}&=\frac{1}{2} \begin{pmatrix}
1+\frac{\Psi(t)}{\Psi(s)} & 1-\frac{\Psi(t)}{\Psi(s)}\\[2mm]
1-\frac{\Psi(t)}{\Psi(s)} & 1+\frac{\Psi(t)}{\Psi(s)}\\[2mm]
\end{pmatrix},
\end{align*}
 where $\Psi(t)> 0$ is an arbitrary decreasing function of $t\geq 0$;
  \begin{align*}
  \Q_3^{[s,t]}&=
\begin{cases}
\ \ \begin{pmatrix}
1 & 0\\[2mm]
0 & 1 \\[2mm]
\end{pmatrix}, & \ \ \text{if} \ \ s\leq t<b,\\[4mm]
\frac{1}{2} \begin{pmatrix}
1 & 1\\[2mm]
1 & 1 \\[2mm]
\end{pmatrix},& \ \ \text{if} \ \ t\geq b,\\
\end{cases}, \quad \text{where} \ \ b>0 ;\\[3mm]
 \Q_4^{[s,t]}&=\begin{pmatrix}
1  & 0\\[2mm]
1-\frac{\psi(t)}{\psi(s)} & \frac{\psi(t)}{\psi(s)}\\[2mm]
\end{pmatrix},
\end{align*}
where $\psi(t)>0$ is a decreasing function
of $t\geq 0$;
\begin{align*}
 \Q_5^{[s,t]}&=\begin{pmatrix}
f(t)& 1-f(t)\\[2mm]
f(t) &1-f(t)\\[2mm]
\end{pmatrix}, \ \ \text{where} \ \ f(t)\in [0,1] \ \ \text{is an arbitrary function};\\
 \Q_6^{[s,t]}(\lambda,\mu)&=
\begin{pmatrix}
1-\frac{\lambda-2\mu}{2(\lambda-\mu)}\left(1- \frac{\theta(t)}{\theta(s)} \right)&
\frac{\lambda-2\mu}{ 2(\lambda-\mu)} \left(1-\frac{\theta(t)}{\theta(s)}  \right)\\[2mm]
\frac{\lambda}{2(\lambda-\mu)} \left(1-\frac{\theta(t)}{\theta(s)} \right)&
 1-\frac{\lambda}{2(\lambda-\mu) }\left(1- \frac{\theta(t)}{\theta(s)}\right) \\[2mm]
\end{pmatrix},
\end{align*}
where  $\lambda$, $\mu$ are real parameters  such that $0<2\mu<\lambda$ and $\theta(t)>0$ is an arbitrary decreasing function;
\[\Q_7^{[s,t]}=
\begin{cases}
\begin{pmatrix}
1 & 0\\[2mm]
0 & 1\\[2mm]
\end{pmatrix}, & \ \ \text{if} \ \ s\leq t<a,\\[4mm]
\begin{pmatrix}
g(t) & 1-g(t)\\[2mm]
g(t) & 1-g(t)\\[2mm]
\end{pmatrix}, & \ \ \text{if} \ \ t\geq a,\\
\end{cases} \  \text{where} \ \ g(t)\in [0,1] \  \text{is an arbitrary function}.\]

Using the right stochastic matrices we can construct the following QSPs of type (3|0):
\[\M^{[s,t]}=\left(\M_1^{[s,t]}\, \big| \, \M_2^{[s,t]}\right), \ \ \text{with any} \ \ \M_1^{[s,t]}, \M_2^{[s,t]}
\in \left\{\Q_2^{[s,t]}, \Q_3^{[s,t]}, \dots, \Q_7^{[s,t]}\right\}.\]

We note that the matrices $\Q_i^{[s,t]}$, $i=1,\dots,7$, generate interesting usual Markov processes:
some of them independent on time, some depend only on $t$,
but many of them non-homogenously depend on both $s,t$.  Depending on the statistical models
of real-world processes one can choose
parameter functions (i.e. $g$,  $\Psi$, $\psi$, $f$,  $\theta$) and be able then to control
the evolution (with respect to time) of such Markov processes.
Then the evolution of the QSP of type (3|0) will be given by the evolution of two independent Markov processes.
\end{ex}

\section{QSPs of type $(12|a_0)$}\label{Se4}

Let $\mathcal M^{[s,t]}=\left(P_{ijk}^{[s,t]}\right)$ be a cubic matrix,
 define the square matrix  $\overline{\mathcal M}^{[s,t]}=(\bar c^{[s,t]}_{ik})$ with
\begin{equation}\label{cd}
\bar c^{[s,t]}_{ik}=\sum_{j=1}^m  P_{ijk}^{[s,t]}, \qquad  i,k=1,\dots,m.
\end{equation}

\begin{pro}[\cite{LR}] Any solution of equation \eqref{KC} for the multiplication of type $a_0$ (equivalently equation \eqref{eD})
can be given by a solution of the system \eqref{cd} with a matrix $\overline{\mathcal M}^{[s,t]}=(\bar c^{[s,t]}_{ik})$ which satisfies
\eqref{KC0}.
\end{pro}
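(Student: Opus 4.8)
The plan is to compute the multiplication $*_{a_0}$ explicitly and thereby reduce \eqref{eD} to the ordinary Chapman--Kolmogorov equation, layer by layer. Since $a_0(l,n)=l$ for all $l,n$, in the bilinear extension of \eqref{ma} the constraint $a_0(l,n)=j$ forces $l=j$ while $n$ stays free, so for cubic matrices $A=(a_{ijk})$, $B=(b_{ijk})$ we get $(A*_{a_0}B)_{ijr}=\sum_k a_{ijk}\big(\sum_n b_{knr}\big)$. Writing $A_j=(a_{ijk})_{i,k}$ for the $j$-th square layer of $A$ and $\overline B=(\sum_n b_{knr})_{k,r}$ for the square matrix attached to $B$ by \eqref{cd}, this reads $(A*_{a_0}B)_j=A_j\,\overline B$ for every $j$. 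Hence \eqref{eD} is equivalent to the system
\begin{equation*}
\mathcal M_j^{[s,t]}=\mathcal M_j^{[s,\tau]}\,\overline{\mathcal M}^{[\tau,t]},\qquad j=1,\dots,m,\quad 0\le s<\tau<t,
\end{equation*}
in which $\mathcal M_j^{[s,t]}=(P_{ijk}^{[s,t]})_{i,k}$ is the $j$-th layer and $\overline{\mathcal M}^{[\tau,t]}=\sum_j\mathcal M_j^{[\tau,t]}$ is precisely the matrix produced from $\mathcal M^{[\tau,t]}$ by \eqref{cd}.

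From this reformulation the necessity direction is immediate: summing the $m$ layer identities over $j$ and using $\sum_j\mathcal M_j^{[s,t]}=\overline{\mathcal M}^{[s,t]}$ gives $\overline{\mathcal M}^{[s,t]}=\overline{\mathcal M}^{[s,\tau]}\overline{\mathcal M}^{[\tau,t]}$, i.e. $\overline{\mathcal M}^{[s,t]}$ satisfies \eqref{KC0}, and by construction the pair $(\mathcal M^{[s,t]},\overline{\mathcal M}^{[s,t]})$ satisfies \eqref{cd}. If one keeps the stochasticity \eqref{e12}, note that $(1,2)$-stochasticity of $\mathcal M^{[s,t]}$ is equivalent to left-stochasticity of $\overline{\mathcal M}^{[s,t]}$ (both say $\sum_i\bar c_{ir}^{[s,t]}=1$), so the associated $\overline{\mathcal M}^{[s,t]}$ is automatically a left-stochastic solution of \eqref{KC0}.

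For the converse --- recovering $\mathcal M^{[s,t]}$ from $\overline{\mathcal M}^{[s,t]}$ --- I would start from an arbitrary family $\{\overline{\mathcal M}^{[s,t]}\}$ satisfying \eqref{KC0} (left-stochastic if stochasticity is wanted) and build the simplest lift: for each $s$ choose, for every $i$, a probability vector $(\phi_{ij}^{[s]})_{j=1}^m$ and set $P_{ijr}^{[s,t]}:=\phi_{ij}^{[s]}\,\bar c_{ir}^{[s,t]}$, i.e. $\mathcal M_j^{[s,t]}=D_j^{[s]}\overline{\mathcal M}^{[s,t]}$ with $D_j^{[s]}=\operatorname{diag}(\phi_{1j}^{[s]},\dots,\phi_{mj}^{[s]})$ and $\sum_jD_j^{[s]}=I$. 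Then \eqref{cd} holds because $\sum_jD_j^{[s]}=I$; the layer equation, hence \eqref{eD}, follows from \eqref{KC0} via $\mathcal M_j^{[s,\tau]}\overline{\mathcal M}^{[\tau,t]}=D_j^{[s]}\overline{\mathcal M}^{[s,\tau]}\overline{\mathcal M}^{[\tau,t]}=D_j^{[s]}\overline{\mathcal M}^{[s,t]}=\mathcal M_j^{[s,t]}$; and $(1,2)$-stochasticity of $\mathcal M^{[s,t]}$ follows from left-stochasticity of $\overline{\mathcal M}^{[s,t]}$ together with $\sum_j\phi_{ij}^{[s]}=1$. (More generally, over a fixed $\overline{\mathcal M}$ every solution of \eqref{eD} is a family of layers evolving by right-multiplication by $\overline{\mathcal M}$ subject to \eqref{cd} and positivity; the product lift is the canonical example.) This establishes the stated reduction: solving \eqref{eD} amounts to solving the ordinary equation \eqref{KC0}, with the only extra freedom sitting in the middle-index weights $\phi_{ij}^{[s]}$.

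The single genuinely substantive step is the first one, namely that $*_{a_0}$ collapses to a product depending on the right factor only through its \eqref{cd}-contraction, equivalently the layer formula $(A*_{a_0}B)_j=A_j\overline B$; after that the argument is bookkeeping plus repeated use of \eqref{KC0}. The only other point that needs care is keeping the stochasticity bookkeeping straight --- checking that $(1,2)$-stochastic cubic matrices correspond exactly to left-stochastic $\overline{\mathcal M}^{[s,t]}$ and that this is preserved along the reduction --- which is what ensures the construction yields genuine QSPs of type $(12|a_0)$ rather than mere solutions of the bare equation \eqref{eD}.
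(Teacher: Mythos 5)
Your proof is correct and takes essentially the approach the paper relies on: the paper states this proposition without proof (citing \cite{LR}), and its own computations in Section~\ref{Se4}, where the layer equations are derived ``from \eqref{eD} by \eqref{cd}'', are exactly your observation that $*_{a_0}$ acts on each middle-index layer by right multiplication with the contracted matrix $\overline{\mathcal M}$, so that summing over layers yields \eqref{KC0}. Your explicit product lift for the converse and the stochasticity bookkeeping (which is Lemma~\ref{l2}) are consistent with how the result is used, so nothing is missing.
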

From this proposition it follows that the family of matrices $\overline{\mathcal M}^{[s,t]}=(\bar c^{[s,t]}_{ik})$
is a Markov process if and only if the matrices are left stochastic.

The following lemma gives a connection between left stochastic and (1,2)-stochastic matrices.
\begin{lemma}\label{l2} The matrix $\mathcal M^{[s,t]}=\left(P_{ijk}^{[s,t]}\right)$, with $P_{ijk}^{[s,t]}\geq 0$,  is (1,2)-stochastic if and only if the corresponding  matrix $\overline{\mathcal M}^{[s,t]}$
is left stochastic.
\end{lemma}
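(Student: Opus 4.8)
The plan is to prove Lemma~\ref{l2} directly from the definitions, observing that the summation defining $\bar c^{[s,t]}_{ik}$ in \eqref{cd} collapses the $(1,2)$-stochasticity condition onto the columns of $\overline{\mathcal M}^{[s,t]}$. First I would record the trivial half: the nonnegativity of the entries of $\overline{\mathcal M}^{[s,t]}$ follows immediately from $P_{ijk}^{[s,t]}\geq 0$, since each $\bar c^{[s,t]}_{ik}$ is a finite sum of nonnegative reals; this holds unconditionally and needs no further comment.

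The substance is the equivalence of the two ``column sum $=1$'' conditions, and here the key step is to interchange the order of the double summation. By definition, $\overline{\mathcal M}^{[s,t]}$ is left stochastic precisely when $\sum_{i=1}^m \bar c^{[s,t]}_{ik}=1$ for every $k$. Substituting \eqref{cd} and swapping the finite sums over $i$ and $j$, one gets
\[
\sum_{i=1}^m \bar c^{[s,t]}_{ik}=\sum_{i=1}^m\sum_{j=1}^m P_{ijk}^{[s,t]}=\sum_{i,j=1}^m P_{ijk}^{[s,t]},
\]
which is exactly the left-hand side of the normalization condition in \eqref{e12} defining $(1,2)$-stochasticity. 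Hence $\sum_{i=1}^m \bar c^{[s,t]}_{ik}=1$ for all $k$ if and only if $\sum_{i,j=1}^m P_{ijk}^{[s,t]}=1$ for all $k$, and combining this with the already-noted nonnegativity yields both implications at once.

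There is essentially no obstacle here: the statement is, as the authors say of the analogous Lemma~\ref{l1}, ``obvious,'' and the only thing to be careful about is making the logical structure explicit — that the nonnegativity hypothesis is built into the phrasing of the lemma (``with $P_{ijk}^{[s,t]}\geq 0$'') so that both stochasticity notions reduce to their respective normalization identities, which are then literally the same identity after reindexing. I would therefore present the argument as the short two-line computation above, prefaced by the remark that nonnegativity is automatic, and conclude that the two conditions are equivalent.
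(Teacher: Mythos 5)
Your proposal is correct and is exactly the paper's argument spelled out: the paper dismisses the lemma as an immediate consequence of the defining equality \eqref{cd}, and your two-line computation (interchanging the finite sums so that $\sum_i \bar c^{[s,t]}_{ik}=\sum_{i,j}P_{ijk}^{[s,t]}$, plus the automatic nonnegativity) is precisely what that one-line proof leaves implicit.
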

\begin{proof} It is consequence of the equality \eqref{cd}.
\end{proof}

\subsection{Two-dimensional cases}
Now we construct QSPs of type $(12|a_0)$ corresponding to the left stochastic matrices mentioned in Example~\ref{ex1}.
Write a cubic matrix $\M^{[s,t]}$ for $m=2$ in the following convenient form:
\begin{equation}\label{edd}
\M^{[s,t]}= \begin{pmatrix}
P_{111}^{[s,t]} &P_{112}^{[s,t]}& \vline &P_{211}^{[s,t]} &P_{212}^{[s,t]}\\[3mm]
P_{121}^{[s,t]} &P_{122}^{[s,t]}& \vline &P_{221}^{[s,t]} &P_{222}^{[s,t]}
\end{pmatrix}.
\end{equation}

{\bf Case $\Q^{[s,t]}_1$:} Let $\overline{\mathcal M}^{[s,t]}=\Q_1^{[s,t]}$. Then from \eqref{eD} by \eqref{cd}
we get
\begin{align*}
P_{ij1}^{[s,t]} & =g(s)P_{ij1}^{[s,\tau]}+(1-g(s))P_{ij2}^{[s,\tau]},   \qquad  i,j=1,2, \\
P_{ij2}^{[s,t]} & =g(s)P_{ij1}^{[s,\tau]}+(1-g(s))P_{ij2}^{[s,\tau]}, \qquad i,j=1,2.
\end{align*}
Consequently $P_{ij1}^{[s,t]}=P_{ij2}^{[s,t]}$. Therefore, by the last system we have $P_{ij1}^{[s,t]}=P_{ij1}^{[s,\tau]}$.
Hence   $P_{ij1}^{[s,t]}$ should not depend on $t$, i.e. there exists a function $u_{ij}(s)$
such that
\begin{equation}\label{cs}
 P_{ij1}^{[s,t]}=P_{ij2}^{[s,t]}=u_{ij}(s).
\end{equation}
By \eqref{cd} and \eqref{cs} we shall have
\begin{align*}
P_{111}^{[s,t]}+P_{121}^{[s,t]} &=P_{112}^{[s,t]}+P_{122}^{[s,t]}=u_{11}(s)+u_{12}(s)=g(s),\\
P_{211}^{[s,t]}+P_{221}^{[s,t]}& =P_{212}^{[s,t]}+P_{222}^{[s,t]}=u_{21}(s)+u_{22}(s)=1-g(s).
\end{align*}
Consequently the matrix \eqref{edd} has the following form:
\begin{equation}\label{esd}
\M^{[s,t]}_{(1)}=\begin{pmatrix}
u_{11}(s) &u_{11}(s)& \vline &u_{21}(s) & u_{21}(s) \\[3mm]
g(s)-u_{11}(s) &g(s)-u_{11}(s)& \vline &1-g(s)-u_{21}(s) &1-g(s)-u_{21}(s)
\end{pmatrix},
\end{equation}
where  $u_{11}$ and $u_{21}$ are arbitrary functions of $s\geq 0$.

Thus we proved the following.
\begin{pro} Let $g(s)\in [0,1]$ be an arbitrary function.
The family of matrices \eqref{esd}, $\M^{[s,t]}_{(1)}$,
is a QSP of type $(12|a_0)$ if and only if the functions $u_{11}(s)$ and $u_{21}(s)$ are such that
\[0\leq u_{11}(s)\leq g(s), \qquad  0\leq u_{21}(s)\leq 1-g(s).\]
\end{pro}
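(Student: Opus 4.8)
The plan is to read off the statement directly from the setup already carried out in the case $\Q^{[s,t]}_1$, which has essentially reduced the problem to checking nonnegativity of the four entries appearing in \eqref{esd}. By the preceding computations we know that the general solution of \eqref{eD} with $\overline{\mathcal M}^{[s,t]}=\Q_1^{[s,t]}$ is forced to have the form \eqref{esd}, parametrised by two arbitrary functions $u_{11}(s)$ and $u_{21}(s)$ of $s\geq 0$; so what remains is precisely to determine for which such functions the cubic matrix $\M^{[s,t]}_{(1)}$ is actually $(1,2)$-stochastic, i.e. a genuine QSP of type $(12|a_0)$.

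First I would invoke Lemma~\ref{l2}: since all the entries of $\M^{[s,t]}_{(1)}$ must be nonnegative and the associated matrix $\overline{\mathcal M}^{[s,t]}$ equals $\Q_1^{[s,t]}$, which is left stochastic for every $g(s)\in[0,1]$, the matrix $\M^{[s,t]}_{(1)}$ is $(1,2)$-stochastic if and only if $P^{[s,t]}_{ijk}\geq 0$ for all $i,j,k$. (The column sums are automatic once we use \eqref{cd} together with the fact that $\Q_1^{[s,t]}$ has all columns summing to $1$.) So the proposition reduces to the inequalities $P^{[s,t]}_{ijk}\geq 0$.

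Next I would simply list the six entries of \eqref{esd}: the entries $u_{11}(s)$ (appearing twice) and $g(s)-u_{11}(s)$ (appearing twice) in the first block, and $u_{21}(s)$ (twice) and $1-g(s)-u_{21}(s)$ (twice) in the second block. Nonnegativity of all of them is equivalent to the two pairs of inequalities $0\leq u_{11}(s)$, $u_{11}(s)\leq g(s)$, and $0\leq u_{21}(s)$, $u_{21}(s)\leq 1-g(s)$, i.e. exactly the stated conditions $0\leq u_{11}(s)\leq g(s)$ and $0\leq u_{21}(s)\leq 1-g(s)$. Conversely, if these bounds hold then all entries are nonnegative, and since the KCE \eqref{eD} was already used to derive the form \eqref{esd}, the family $\{\M^{[s,t]}_{(1)}\}$ automatically satisfies \eqref{KC}; hence it is a QSP of type $(12|a_0)$.

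There is essentially no obstacle here: the hard analytic work (solving the Kolmogorov--Chapman equation and showing $P^{[s,t]}_{ij1}=P^{[s,t]}_{ij2}=u_{ij}(s)$ is $t$-independent) has already been done in the text preceding the statement, and what is left is a one-line bookkeeping check of signs of the four distinct entries in \eqref{esd} together with a citation of Lemma~\ref{l2}. The only point requiring a word of care is the observation that the column-sum conditions in the definition of $(1,2)$-stochasticity are subsumed by the relation $\overline{\mathcal M}^{[s,t]}=\Q_1^{[s,t]}$, so that genuine $(1,2)$-stochasticity is equivalent to nonnegativity alone.
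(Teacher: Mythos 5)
Your proposal follows the paper's own route exactly: the derivation preceding the statement forces the form \eqref{esd}, and the proposition is then just the observation (via Lemma~\ref{l2}, or directly) that the layer sums of \eqref{esd} are automatically $1$, so $(1,2)$-stochasticity reduces to nonnegativity of the four distinct entries, which is precisely $0\leq u_{11}(s)\leq g(s)$ and $0\leq u_{21}(s)\leq 1-g(s)$; this is the same sign bookkeeping the paper leaves implicit after ``Thus we proved the following.'' The only point to tighten is your justification of the ``if'' direction: saying that the family ``automatically satisfies \eqref{KC} because the KCE was used to derive the form'' is backwards, since the derivation only shows the form is \emph{necessary} for a solution with $\overline{\M}^{[s,t]}=\Q_1^{[s,t]}$, not that every matrix of that form is a solution. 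The missing step is, however, a one-line check: for \eqref{esd} one has $P^{[s,\tau]}_{ijk}=u_{ij}(s)$ independent of $k$ and $\sum_{k,n}P^{[\tau,t]}_{knr}=1$ (the layer sums equal $1$ by construction), so the right-hand side of \eqref{eD} equals $u_{ij}(s)\sum_{k,n}P^{[\tau,t]}_{knr}=u_{ij}(s)=P^{[s,t]}_{ijr}$, i.e.\ the Kolmogorov--Chapman equation indeed holds; with that line added (a gloss the paper itself also omits), your argument is complete.
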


For this QSP $\M^{[s,t]}_{(1)}$, using \eqref{xt12}, let us give the time
behavior of the distribution $x^{(t)}=(x_1^{(t)}, x_2^{(t)})\in S^1$. Fix $s\geq 0$
and by taking a vector $x^{(s)}=(x_1^{(s)}, x_2^{(s)})\in S^1$, then by formula \eqref{xt12}
independently on the vector $x^{(s)}$, for \textsl{any} $t>s$,  we get
\begin{align*}
x_1^{(t)}& =A(s)\equiv  \frac{1}{2} \big(g(s)+u_{11}(s)+u_{21}(s)\big),\\[2mm]
x_2^{(t)}& =1-A(s)=1-\frac{1}{2}\big(g(s)+u_{11}(s)+u_{21}(s)\big).
\end{align*}
Thus the time behavior of $x^{(t)}$ is clear: start process at time $s$ with an arbitrary initial
distribution vector $x^{(s)}$ then as soon as
the time $t$ turns on the distribution of the system goes to
the distribution $(A(s), 1-A(s))$ and this distribution remains stable
during all time $t>s$.

{\bf Case $\Q^{[s,t]}_2$:} Let $\overline{\mathcal M}^{[s,t]}=\Q_2^{[s,t]}$. Then from \eqref{eD} by \eqref{cd}
we get
\begin{align*}
P_{ij1}^{[s,t]} & =\frac{1}{2} P_{ij1}^{[s,\tau]}\left(1+\frac{\Psi(t)}{\Psi(\tau)} \right)+\frac{1}{2} P_{ij2}^{[s,\tau]}
\left(1- \frac{\Psi(t)}{\Psi(\tau)}\right),   \qquad  i,j=1,2, \\
P_{ij2}^{[s,t]} & ={\frac{1}{2}}P_{ij1}^{[s,\tau]}\left(1-\frac{\Psi(t)}{\Psi(\tau)}\right)+
\frac{1}{2}P_{ij2}^{[s,\tau]}\left(1+\frac{\Psi(t)}{\Psi(\tau)}\right), \qquad i,j=1,2.
\end{align*}
Denoting  $\alpha_{ij}(s,t)=P_{ij1}^{[s,t]}-P_{ij2}^{[s,t]}$ and  $\beta_{ij}(s,t)=P_{ij1}^{[s,t]}+P_{ij2}^{[s,t]}$, from the last system of equations we get

\[ \frac{ \alpha_{ij}(s,t)}{\Psi(t)} = \frac{\alpha_{ij}(s,\tau)}{\Psi(\tau)}, \qquad \qquad
\beta_{ij}(s,t)=\beta_{ij}(s,\tau).\]

It follows from the last equalities that $\frac{\alpha_{ij}(s,t)}{\Psi(t)}$ and $\beta_{ij}(s,t)$  do not depend on $t$, i.e.
there are functions $\gamma_{ij}(s)$ and $\zeta_{ij}(s)$
such that
\[
 \alpha_{ij}(s,t)=\gamma_{ij}(s)\Psi(t), \qquad  \qquad \beta_{ij}(s,t)=\zeta_{ij}(s).
\]
Consequently,
\begin{equation}\label{alb}
P_{ij1}^{[s,t]}=\frac{1}{2} \left(\gamma_{ij}(s)\Psi(t)+\zeta_{ij}(s)\right),
\qquad \qquad P_{ij2}^{[s,t]}=\frac{1}{2} \left(\zeta_{ij}(s)-\gamma_{ij}(s)\Psi(t)\right).
\end{equation}
By these equalities from \eqref{cd} (for $\overline{\mathcal M}^{[s,t]}=\Q_2^{[s,t]}$) we get
\begin{align*}
\left(\gamma_{11}(s)+\gamma_{12}(s)- \frac{1}{\Psi(s)}\right)\Psi(t)+\zeta_{11}(s)+\zeta_{12}(s)&=1,\\
\zeta_{11}(s)+\zeta_{12}(s)-\left(\gamma_{11}(s)+\gamma_{12}(s)-\frac{1}{\Psi(s)}\right)\Psi(t)&=1,\\
\left(\gamma_{21}(s)+\gamma_{22}(s)+\frac{1}{\Psi(s)} \right)\Psi(t)+\zeta_{21}(s)+\zeta_{22}(s)&=1,\\
\zeta_{21}(s)+\zeta_{22}(s)-\left(\gamma_{21}(s)+\gamma_{22}(s)+\frac{1}{\Psi(s)} \right)\Psi(t)&=1.
\end{align*}
From this system we obtain
\begin{align*}
\zeta_{11}(s)+\zeta_{12}(s)=\zeta_{21}(s)+\zeta_{22}(s)&=1,\\
\gamma_{11}(s)+\gamma_{12}(s)=-\big(\gamma_{21}(s)+\gamma_{22}(s)\big)&=\frac{1}{\Psi(s)}.
 \end{align*}
 Using these equalities and \eqref{alb} the matrix \eqref{edd} can be written in the following form:
\begin{multline}\label{m2}
\M^{[s,t]}_{(2)}=\frac{1}{2} \left(\begin{array}{cc}
\zeta_{11}(s)+\gamma_{11}(s)\Psi(t) &\zeta_{11}(s)-\gamma_{11}(s)\Psi(t)\\[3mm]
1-\zeta_{11}(s)+\left(\frac{1}{\Psi(s)} -\gamma_{11}(s)\right)\Psi(t)
&1-\zeta_{11}(s)-\left(\frac{1}{\Psi(s)} -\gamma_{11}(s)\right)\Psi(t)
\end{array}\right.\\[3mm]
\vline \left.\begin{array}{cc}
\zeta_{21}(s)+ \gamma_{21}(s)\Psi(t)&\zeta_{21}(s)-\gamma_{21}(s)\Psi(t)\\[3mm]
1-\zeta_{21}(s)-\left(\frac{1}{\Psi(s)} +\gamma_{21}(s)\right)\Psi(t)
&1-\zeta_{21}(s)+\left(\gamma_{21}(s)+\frac{1}{\Psi(s)} \right)\Psi(t)
\end{array}\right),
\end{multline}
where $\Psi(t)>0$ is a decreasing function, $\gamma_{11}$, $\gamma_{21}$, $\zeta_{11}$ and $\zeta_{21}$ are arbitrary functions of $s\geq 0$.
\begin{pro}\label{p4}
The family of matrices \eqref{m2}, $\M^{[s,t]}_{(2)}$ (with $\Psi(t)>0$  a decreasing function),
is a QSP of type $(12|a_0)$ if and only if for the functions $\gamma_{11}$, $\gamma_{21}$, $\zeta_{11}$ and $\zeta_{21}$ the following conditions hold:
\begin{align*}
\frac{1}{2}\left(\frac{1}{\Psi(s)} -\frac{1}{\Psi(t)}\right) & \leq\gamma_{11}(s)\leq \frac{1}{2} \left(\frac{1}{\Psi(s)}+\frac{1}{\Psi(t)}\right), \ \ \forall s,t, \ 0\leq s<t;\\
{} - \frac{1}{2} \left(\frac{1}{\Psi(s)} + \frac{1}{\Psi(t)}\right) & \leq\gamma_{21}(s)\leq \frac{1}{2} \left(\frac{1}{\Psi(s)}
-\frac{1}{\Psi(t)}\right), \ \ \forall s,t, \ 0\leq s<t;
\end{align*}
\[0\leq \zeta_{11}(s)\leq 1, \qquad  0\leq \zeta_{21}(s)\leq 1, \quad  \forall s\geq 0.\]
\end{pro}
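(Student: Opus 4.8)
The plan is to reduce the claim to a nonnegativity question and then solve a small system of linear inequalities. Observe first that the matrix \eqref{m2} was built so that the square matrix \eqref{cd} formed out of it is exactly $\overline{\M}^{[s,t]}=\Q_2^{[s,t]}$; this is what the relations $\zeta_{11}(s)+\zeta_{12}(s)=1$, $\gamma_{11}(s)+\gamma_{12}(s)=1/\Psi(s)$ and their analogues were introduced to force, and it is confirmed by a one-line computation. Since $\Q_2^{[s,t]}$ is doubly stochastic and satisfies \eqref{KC0} (Example~\ref{ex1}), the proposition stated just before Lemma~\ref{l2} (together with a direct check) shows that the family \eqref{m2} solves the cubic Kolmogorov--Chapman equation \eqref{eD} for \emph{every} choice of the parameter functions, and Lemma~\ref{l2} shows that it automatically satisfies the normalization $\sum_{i,j}P^{[s,t]}_{ijr}=1$ of \eqref{e12}. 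Hence \eqref{m2} is a QSP of type $(12|a_0)$ if and only if all eight entries $P^{[s,t]}_{ijk}$ are nonnegative for every $0\le s<t$, and the entire content of the proposition is the translation of these eight inequalities into the stated conditions.

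Next I would write out the eight entries. They fall into two groups that do not interact: $P^{[s,t]}_{1jk}$ depends only on $\zeta_{11}(s),\gamma_{11}(s),\Psi(s),\Psi(t)$, and $P^{[s,t]}_{2jk}$ only on $\zeta_{21}(s),\gamma_{21}(s),\Psi(s),\Psi(t)$, the second group being obtained from the first by the substitution $1/\Psi(s)\mapsto -1/\Psi(s)$ — which is precisely what turns the symmetric interval for $\gamma_{11}(s)$ into the shifted interval for $\gamma_{21}(s)$. For the first group: $P^{[s,t]}_{111}+P^{[s,t]}_{112}=\zeta_{11}(s)$ forces $\zeta_{11}(s)\ge 0$, and $P^{[s,t]}_{121}+P^{[s,t]}_{122}=1-\zeta_{11}(s)$ forces $\zeta_{11}(s)\le 1$; in the sum $P^{[s,t]}_{111}+P^{[s,t]}_{122}$ the $\zeta_{11}$-terms cancel, so after dividing by $\Psi(t)>0$ nonnegativity gives $\gamma_{11}(s)\ge \tfrac12\bigl(1/\Psi(s)-1/\Psi(t)\bigr)$, and likewise $P^{[s,t]}_{112}+P^{[s,t]}_{121}\ge 0$ gives $\gamma_{11}(s)\le \tfrac12\bigl(1/\Psi(s)+1/\Psi(t)\bigr)$. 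The second group is handled identically with the sign change. This proves the ``only if'' direction.

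For the ``if'' direction I would run this backwards: assuming $0\le\zeta_{11}(s)\le 1$ and the two-sided bound on $\gamma_{11}(s)$ for all $0\le s<t$ (and their counterparts for the second group), I must recover the individual entrywise inequalities $P^{[s,t]}_{1jk}\ge 0$, $P^{[s,t]}_{2jk}\ge 0$. This is the step I expect to be the main obstacle: it is where the decreasing monotonicity of $\Psi$ — that is, $0<\Psi(t)<\Psi(s)$ whenever $s<t$ — must be used, to control the cross terms $\gamma_{11}(s)\Psi(t)$ and $\bigl(1/\Psi(s)-\gamma_{11}(s)\bigr)\Psi(t)$ against $\zeta_{11}(s)$ and $1-\zeta_{11}(s)$ and thereby see that the four combined inequalities per group, together with the box constraints on the $\zeta$'s, are genuinely equivalent to the eight entrywise ones. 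Everything else — the reduction in the first paragraph and the ``only if'' direction — is routine bookkeeping.
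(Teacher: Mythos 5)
Your reduction to entrywise nonnegativity and your ``only if'' computation follow essentially the paper's own route: the paper also notes that $\sum_{i,j}P^{[s,t]}_{ijk}=1$ holds automatically for \eqref{m2}, and rewrites the block of inequalities $P^{[s,t]}_{1ij}\ge 0$ as the coupled system $0\le \zeta_{11}(s)+\gamma_{11}(s)\Psi(t)\le 1+\Psi(t)/\Psi(s)$, $0\le \zeta_{11}(s)-\gamma_{11}(s)\Psi(t)\le 1-\Psi(t)/\Psi(s)$, from which your pairing of entries extracts exactly the stated bounds. The genuine gap is the ``if'' direction, which you explicitly leave open, and it cannot be closed in the form you hope for. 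In the variables $u=\zeta_{11}(s)$, $v=\gamma_{11}(s)\Psi(t)$, $\rho=\Psi(t)/\Psi(s)\in(0,1)$, the entrywise system is the parallelogram $0\le u+v\le 1+\rho$, $0\le u-v\le 1-\rho$, whereas the conditions in the proposition describe only its bounding box $0\le u\le 1$, $\tfrac12(\rho-1)\le v\le \tfrac12(\rho+1)$, which is strictly larger. Concretely, $\zeta_{11}(s)\equiv 0$ together with $\gamma_{11}(s)=\tfrac{1}{2\Psi(s)}$ satisfies all the stated conditions on the first block for every $0\le s<t$, yet $P^{[s,t]}_{112}=-\Psi(t)/(4\Psi(s))<0$; taking, e.g., $\Psi(t)=1+e^{-t}$ one can also choose admissible $\zeta_{21},\gamma_{21}$, so the full list of printed conditions does not imply $(1,2)$-stochasticity. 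Monotonicity of $\Psi$ does not rescue this: sufficiency requires conditions that couple $\zeta$ and $\gamma$, for instance $|\gamma_{11}(s)|\Psi(t)\le \zeta_{11}(s)$ and $|1/\Psi(s)-\gamma_{11}(s)|\Psi(t)\le 1-\zeta_{11}(s)$ for all $t>s$, and analogously for the second block.

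Two further remarks. First, the paper's own proof has the same hole: after writing the coupled system it merely says that ``solving this system of inequalities with respect to $\zeta_{11}(s)$ and $\gamma_{11}(s)$ we get the conditions'', which produces the necessary bounds but not their sufficiency; so your instinct about where the obstacle sits is sound, but as a proof of the stated biconditional your proposal delivers only the ``only if'' half, and the remaining half is not merely hard but false for the printed interval conditions. Second, a sign check on the second block: applying your substitution $1/\Psi(s)\mapsto -1/\Psi(s)$ to the interval for $\gamma_{11}(s)$ yields the upper bound $\tfrac12\bigl(1/\Psi(t)-1/\Psi(s)\bigr)$, which is positive, whereas the proposition prints $\tfrac12\bigl(1/\Psi(s)-1/\Psi(t)\bigr)$, which is negative for decreasing $\Psi$; these do not coincide, so you should not assert that the substitution reproduces the printed interval — your own pairing computation for $P^{[s,t]}_{212}+P^{[s,t]}_{221}\ge 0$ would in fact flag this discrepancy in the statement.
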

\begin{proof} It is easy to see that the matrix $\M_{(2)}^{[s,t]}$ satisfies
 $\sum_{i,j}P_{ijk}^{[s,t]}=1$, for $k=1,2$.
Therefore we shall show that $P_{ijk}^{[s,t]}\geq 0$. The system of inequalities $P_{1ij}^{[s,t]}\geq 0$, $i,j=1,2$,
is equivalent to
\begin{align*}
0 & \leq  \zeta_{11}(s)+\gamma_{11}(s)\Psi(t)\leq 1+ \frac{\Psi(t)}{\Psi(s)},\\
 0 &\leq  \zeta_{11}(s)-\gamma_{11}(s)\Psi(t)\leq 1-\frac{\Psi(t)}{\Psi(s)}.
\end{align*}
Solving this system of inequalities with respect to $\zeta_{11}(s)$ and $\gamma_{11}(s)$ we get the conditions mentioned in the proposition.
The conditions for   $\zeta_{21}(s)$ and $\gamma_{21}(s)$ can be obtained similarly from the system of inequalities $P_{2ij}^{[s,t]}\geq 0$, $i,j=1,2$.
\end{proof}
\begin{rk} If $\Psi(t)>0$ is a bounded function, say $\Psi(0)\leq \Psi(t)\leq \Psi(\infty)$,
then the condition of Proposition~\ref{p4} can be given uniformly with respect to $t$, i.e.
one gets
\begin{align*}
\frac{1}{2} \left(\frac{1}{\Psi(s)}-\frac{1}{\Psi(\infty)}\right) &
 \leq\gamma_{11}(s)\leq \frac{1}{2} \left(\frac{1}{\Psi(s)} +\frac{1}{\Psi(\infty)} \right), \quad  \ \text{for all} \  s\geq 0; \\
{} - \frac{1}{2} \left(\frac{1}{\Psi(s)}+\frac{1}{\Psi(\infty)}\right) &
 \leq\gamma_{21}(s)\leq  \frac{1}{2} \left(\frac{1}{\Psi(s)}-\frac{1}{\Psi(0)}\right), \qquad \text{for all} \  s\geq 0.
\end{align*}
\end{rk}

Now let us give, for the QSP $\M^{[s,t]}_{(2)}$, the time
behavior of the distribution $x^{(t)}=(x_1^{(t)}, x_2^{(t)})\in S^1$. Fix $s\geq 0$
and by taking an initial distribution $x^{(s)}=(x_1^{(s)}, x_2^{(s)})\in S^1$, then by formula \eqref{xt12}
for \textsl{any} $t>s$,  we get
\begin{align*}
x_1^{(t)}& =\frac{1}{4} \left(1+\zeta_{11}(s)+\zeta_{21}(s)+\Big\{\gamma_{11}(s)+\gamma_{21}(s)+\frac{1}{\Psi(s)} \Big\}\Psi(t)\right)x_1^{(s)}\\[2mm]
 & {} +\frac{1}{4} \left(1+\zeta_{11}(s)+\zeta_{21}(s)-\Big\{\gamma_{11}(s)+\gamma_{21}(s)+\frac{1}{\Psi(s)} \Big\}\Psi(t)\right)x_2^{(s)},\\[3mm]
x_2^{(t)}& =\frac{1}{4} \left(3-\zeta_{11}(s)-\zeta_{21}(s)-\Big\{\gamma_{11}(s)+\gamma_{21}(s)+\frac{1}{\Psi(s)} \Big\}\Psi(t)\right)x_1^{(s)}\\[2mm]
 &  {} +\frac{1}{4}\left(3-\zeta_{11}(s)-\zeta_{21}(s)+\Big\{\gamma_{11}(s)+\gamma_{21}(s)+\frac{1}{\Psi(s)} \Big\}\Psi(t)\right)x_2^{(s)}.
\end{align*}
Thus the time behavior of $x^{(t)}$ depends on the function $\Psi(t)$:
\begin{itemize}
\item[-] If $\Psi(t)$ has a limit, say $\Psi(\infty)$, then depending on the initial vector $x^{(s)}$ we have
the following limit distribution:
\[\lim_{t\to\infty}x^{(t)}=\big(x_1^{(\infty)}(s), 1- x_1^{(\infty)}(s)\big),\]
where
\begin{align*}
x_1^{(\infty)}(s) &=\frac{1}{4} \left(1+\zeta_{11}(s)+\zeta_{21}(s)+\Big\{\gamma_{11}(s)+\gamma_{21}(s)+\frac{1}{\Psi(s)} \Big\}\Psi(\infty)\right)x_1^{(s)}\\
 &  {} +\frac{1}{4} \left(1+\zeta_{11}(s)+\zeta_{21}(s)-\Big\{\gamma_{11}(s)+\gamma_{21}(s)+\frac{1}{\Psi(s)} \Big\}\Psi(\infty)\right)x_2^{(s)}.
\end{align*}
\item[-] If $\Psi(t)$ is a periodic function, then for any $t>s$ the behavior
of $x^{(t)}$ will be periodic.
\end{itemize}
Thus if one starts the process at time $s$ with an arbitrary initial
distribution vector $x^{(s)}$ then the distribution of the system goes to
a limit distribution if $\Psi$ has a limit, otherwise, the set of limit points of
$x^{(s)}$ is equivalent to the set of limit points of $\Psi(t)$. Concluding, we say that
choosing the parameter functions $\Psi$, $\zeta_{11}$, $\zeta_{21}$, $\gamma_{11}$, and
$\gamma_{21}$, one can control the behavior of the distribution $x^{(t)}$ during all time $t>s$.

{\bf Case $\Q_3^{[s,t]}$:} Let $\overline{\mathcal M}^{[s,t]}=\Q_3^{[s,t]}$.
Then from \eqref{eD} by \eqref{cd} for $t<b$ we get
\[P_{ij1}^{[s,t]} =P_{ij1}^{[s,\tau]}, \qquad \quad P_{ij2}^{[s,t]} =P_{ij2}^{[s,\tau]},\quad i,j=1,2.\]
Consequently, there are $\eta_{ij}(s)$ and $\xi_{ij}(s)$ such that
\[P_{ij1}^{[s,t]} =\eta_{ij}(s), \qquad \quad P_{ij2}^{[s,t]} =\xi_{ij}(s),\quad i,j=1,2.\]
Moreover, by \eqref{cd}, for any $s\geq 0$, we shall have
\begin{align*}
\eta_{11}(s)+\eta_{12}(s)=1, \qquad \quad \eta_{21}(s)+\eta_{22}(s)=0.\\
\xi_{11}(s)+\xi_{12}(s)=0, \qquad  \quad \xi_{21}(s)+\xi_{22}(s)=1.
\end{align*}
In case $t\geq b$ the solution of the equation can be reduced to the case $\Q_1^{[s,t]}$ with $g(s)\equiv \frac{1}{2}$.
Therefore, we get
\[\M^{[s,t]}_{(3)}=
\begin{cases}
\begin{pmatrix}
\eta_{11}(s) & \xi_{11}(s) & \vline & \eta_{21}(s) & \xi_{21}(s)\\
1-\eta_{11}(s) & -\xi_{11}(s) & \vline & -\eta_{21}(s) & 1-\xi_{21}(s)
\end{pmatrix}, &  \text{if} \  s\leq t<b,\\[4mm]
\begin{pmatrix}
\kappa_{11}(s) & \kappa_{11}(s) & \vline & \kappa_{21}(s) & \kappa_{21}(s) \\
\frac{1}{2} -\kappa_{11}(s) &\frac{1}{2}-\kappa_{11}(s) & \vline & \frac{1}{2}-\kappa_{21}(s) & \frac{1}{2}-\kappa_{21}(s)
\end{pmatrix}, & \text{if}  \ t\geq b,\\
\end{cases}, \ \text{where}  \ b>0.
\]
The following proposition is obvious.

\begin{pro} The family of matrices $\M_{(3)}^{[s,t]}$ is a QSP of type $(12|a_0)$ if and only if
\[\eta_{11}(x),\, \xi_{21}(s)\in [0,1], \qquad \eta_{21}(s)=\xi_{11}(s)\equiv 0, \qquad  \kappa_{11}(x),\, \kappa_{21}(s)\in [0,1/2].\]
\end{pro}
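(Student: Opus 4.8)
The plan is to simply impose the defining conditions of a QSP of type $(12|a_0)$ — namely nonnegativity of all nine entries of the cubic matrix $\M_{(3)}^{[s,t]}$ together with the $(1,2)$-stochasticity $\sum_{i,j}P_{ijr}^{[s,t]}=1$ — onto the explicit parametrization derived just above the statement, and to read off the resulting restrictions on the free functions $\eta_{11}$, $\xi_{11}$, $\eta_{21}$, $\xi_{21}$, $\kappa_{11}$, $\kappa_{21}$. Since the $(1,2)$-stochasticity relations were already used in the derivation of the displayed form of $\M_{(3)}^{[s,t]}$ (the column-sum conditions on the $\eta$'s and $\xi$'s, and the analogous ones forcing the $\kappa$-block), the content of the proof is entirely the nonnegativity bookkeeping, case $s\le t<b$ and case $t\ge b$ handled separately, exactly as the matrix is defined piecewise.

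First I would treat the regime $s\le t<b$. Here the first layer (columns indexed by the pair $(i,j)$ with $j=1$) has entries $\eta_{11}(s),\ 1-\eta_{11}(s),\ \eta_{21}(s),\ -\eta_{21}(s)$, so nonnegativity of the last two forces $\eta_{21}(s)\le 0$ and $\eta_{21}(s)\ge 0$, i.e. $\eta_{21}(s)\equiv 0$, while nonnegativity of the first two gives $0\le\eta_{11}(s)\le 1$. Symmetrically the second layer has entries $\xi_{11}(s),\ -\xi_{11}(s),\ \xi_{21}(s),\ 1-\xi_{21}(s)$, which forces $\xi_{11}(s)\equiv 0$ and $0\le\xi_{21}(s)\le 1$. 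Together with the column-sum identities this accounts for all constraints coming from the first branch. Next I would treat $t\ge b$: the block is exactly the $\Q_1$-type matrix with $g\equiv\frac12$, so its entries are $\kappa_{11}(s),\ \frac12-\kappa_{11}(s),\ \kappa_{21}(s),\ \frac12-\kappa_{21}(s)$ (each appearing twice), and nonnegativity is equivalent to $0\le\kappa_{11}(s)\le\frac12$ and $0\le\kappa_{21}(s)\le\frac12$; the row/column sums are automatic by construction.

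Finally I would check that the two branches are mutually consistent — the Kolmogorov–Chapman equation \eqref{eD} across the threshold $t=b$ is already guaranteed by the Proposition of \cite{LR} that reduces everything to \eqref{KC0} for $\overline{\M}^{[s,t]}=\Q_3^{[s,t]}$, which is one of the matrices listed in Example~\ref{ex1} — and conversely that any choice of the six functions obeying the stated inequalities yields all entries nonnegative and all prescribed index-sums equal to $1$, hence a genuine QSP of type $(12|a_0)$ by Lemma~\ref{l2} and the cited Proposition. I do not anticipate a real obstacle here: the statement is flagged as ``obvious'' precisely because, once the parametrized form of $\M_{(3)}^{[s,t]}$ is in hand, the proof is the elementary observation that ``all entries $\ge 0$'' unwinds coordinatewise into the displayed bounds, with the only mildly non-routine point being the cancellation that pins $\eta_{21}$ and $\xi_{11}$ identically to zero.
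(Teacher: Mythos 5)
Your entrywise bookkeeping matches what the paper leaves implicit (its entire ``proof'' is the sentence that the proposition is obvious): on the branch $s\le t<b$ the entries $\eta_{21}(s)$, $-\eta_{21}(s)$, $\xi_{11}(s)$, $-\xi_{11}(s)$ force $\eta_{21}=\xi_{11}\equiv 0$ while $\eta_{11}(s),\xi_{21}(s)\in[0,1]$, and on the branch $t\ge b$ nonnegativity amounts to $\kappa_{11}(s),\kappa_{21}(s)\in[0,1/2]$; the sums $\sum_{i,j}P_{ijk}^{[s,t]}=1$ are automatic. (A cosmetic slip: the layers are indexed by the third index $k$, not by $j$, but the four entries you list in each layer are the correct ones.)

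The genuine gap is your last step, the claim that the Kolmogorov--Chapman equation across the threshold $t=b$ ``is already guaranteed by the Proposition of \cite{LR}.'' That proposition is only a necessary condition: every solution of \eqref{eD} has column sums $\overline{\M}^{[s,t]}$ satisfying \eqref{KC0}; it does not say that any cubic family whose column sums reproduce a solution of \eqref{KC0} satisfies \eqref{eD} (the paper's own treatments of the cases $\Q_1$ and $\Q_2$ return to \eqref{eD} and extract extra structure on the cubic entries for exactly this reason). Checking \eqref{eD} directly for $s<\tau<b\le t$: all column sums of $\M_{(3)}^{[\tau,t]}$ equal $\tfrac12$, so the right-hand side is $\tfrac12\bigl(P_{ij1}^{[s,\tau]}+P_{ij2}^{[s,\tau]}\bigr)$, which forces $\kappa_{11}(s)=\tfrac12\bigl(\eta_{11}(s)+\xi_{11}(s)\bigr)=\tfrac12\eta_{11}(s)$ and $\kappa_{21}(s)=\tfrac12\bigl(\eta_{21}(s)+\xi_{21}(s)\bigr)=\tfrac12\xi_{21}(s)$ for $s<b$. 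These linkages are not implied by the inequalities you list: take for instance $\eta_{11}\equiv 1$, $\xi_{21}\equiv 1$, $\kappa_{11}\equiv 0$; every stated bound holds, yet $P_{111}^{[s,t]}=0\neq\tfrac12$ for $s<\tau<b\le t$, so the KCE fails and the family is not a QSP. Hence your argument establishes only the equivalence of the stated bounds with nonnegativity and normalization, not the ``if'' direction of the proposition; a complete proof must verify \eqref{eD} for all $s<\tau<t$, and doing so produces the additional constraint tying $\kappa_{11},\kappa_{21}$ to $\eta_{11},\xi_{21}$ on $[0,b)$ --- a point that the paper's bare assertion of obviousness also passes over.
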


\subsection{$m$-dimensional case}

For arbitrary $m$ the following theorem gives an example of time
non-homogenous QSP of type $(12|a_0)$:

\begin{thm}\label{tA}
 Let $\{A^{[t]}=(a_{ij}^{[t]}), \,t\geq 0\}$ be a family of
invertible  $m\times m$ square matrices (for all $t$), and
let $(A^{[t]})^{-1}=(b_{ij}^{[t]})$ denote the inverse of $A^{[t]}$.
Assume that
\begin{itemize}
\item[(i)] The square matrix $\overline\M^{[s,t]}=A^{[s]}(A^{[t]})^{-1}$ is left stochastic for any $s<t$.
\item[(ii)] Take arbitrary functions $\beta_{ijk}^{(s)}$, $i,j,k=1, \dots,m$, such that
\begin{align*}
\sum_{j=1}^m\beta_{ijk}^{(s)} & =a_{ik}^{[s]}, \qquad \text{for any} \ \ i,k \ \ \text{and} \ \ s,\\
\sum_{k=1}^m\beta_{ijk}^{(s)}b_{kr}^{[t]} & \geq 0, \qquad \text{for any} \ \ i,j,r \ \ \text{and} \ \ s<t.
\end{align*}
\end{itemize}
Then the cubic matrix
\begin{equation}\label{ml}
\M^{[s,t]}=\left(\sum_{k=1}^m\beta_{ijk}^{(s)}b_{kr}^{[t]}\right)_{i,j,r=1}^m
\end{equation}
generates a QSP of type $(12|a_0)$.
\end{thm}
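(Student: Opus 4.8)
The plan is to check directly that the family $\{\M^{[s,t]}\}$ defined in \eqref{ml} has the two properties required of a QSP of type $(12|a_0)$: that each cubic matrix $\M^{[s,t]}$ is $(1,2)$-stochastic, and that it satisfies the Kolmogorov--Chapman equation in the form \eqref{eD}. Throughout I write $P_{ijr}^{[s,t]}=\sum_{k=1}^m\beta_{ijk}^{(s)}b_{kr}^{[t]}$ for the entries of $\M^{[s,t]}$.

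\textbf{Identifying the associated square matrix and $(1,2)$-stochasticity.} Summing $P_{ijr}^{[s,t]}$ over the middle index $j$ and using the first identity in (ii), namely $\sum_j\beta_{ijk}^{(s)}=a_{ik}^{[s]}$, yields
\[
\bar c_{ir}^{[s,t]}=\sum_{j=1}^mP_{ijr}^{[s,t]}=\sum_{k=1}^ma_{ik}^{[s]}b_{kr}^{[t]}=\bigl(A^{[s]}(A^{[t]})^{-1}\bigr)_{ir},
\]
so the square matrix attached to $\M^{[s,t]}$ through \eqref{cd} is exactly $\overline\M^{[s,t]}=A^{[s]}(A^{[t]})^{-1}$, which is left stochastic by hypothesis (i). Nonnegativity of the entries $P_{ijr}^{[s,t]}$ is precisely the second inequality in (ii). Hence $\sum_{i,j}P_{ijr}^{[s,t]}=\sum_i\bar c_{ir}^{[s,t]}=1$ for every $r$, and by Lemma~\ref{l2} the cubic matrix $\M^{[s,t]}$ is $(1,2)$-stochastic.

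\textbf{The Kolmogorov--Chapman equation.} Fix $0\le s<\tau<t$ and substitute \eqref{ml} into the right-hand side of \eqref{eD}:
\[
\sum_{k,n=1}^mP_{ijk}^{[s,\tau]}P_{knr}^{[\tau,t]}=\sum_{k,n}\Bigl(\sum_l\beta_{ijl}^{(s)}b_{lk}^{[\tau]}\Bigr)\Bigl(\sum_p\beta_{knp}^{(\tau)}b_{pr}^{[t]}\Bigr).
\]
Carrying out the sum over $n$ first and using $\sum_n\beta_{knp}^{(\tau)}=a_{kp}^{[\tau]}$ (again the first identity in (ii)), then the sum over $k$ and using $\sum_kb_{lk}^{[\tau]}a_{kp}^{[\tau]}=\bigl((A^{[\tau]})^{-1}A^{[\tau]}\bigr)_{lp}=\delta_{lp}$, the whole expression telescopes to $\sum_l\beta_{ijl}^{(s)}b_{lr}^{[t]}=P_{ijr}^{[s,t]}$, which is \eqref{eD}. (Conceptually this is the identity $\overline\M^{[s,t]}=\overline\M^{[s,\tau]}\overline\M^{[\tau,t]}$, i.e.\ $A^{[s]}(A^{[\tau]})^{-1}A^{[\tau]}(A^{[t]})^{-1}=A^{[s]}(A^{[t]})^{-1}$, together with the Proposition preceding Lemma~\ref{l2}; but for the implication we actually need it is cleanest to verify \eqref{eD} head-on as above.) Combining the two steps, $\{\M^{[s,t]}\}$ is a QSP of type $(12|a_0)$.

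\textbf{Obstacle and a remark.} There is no deep obstacle here: the entire content is the cancellation $(A^{[\tau]})^{-1}A^{[\tau]}=I$ that makes the Chapman--Kolmogorov composition telescope. The only point requiring care is the bookkeeping of contractions, since \eqref{cd} sums the middle index while \eqref{eD} couples the third index of the first factor to the first index of the second \emph{and} also sums the middle index of the second factor; one must keep these distinct. Finally, note that (ii) is a hypothesis rather than something to be constructed, so no existence argument for the $\beta_{ijk}^{(s)}$ is needed; one may still observe that whenever $\overline\M^{[s,t]}$ is left stochastic such $\beta$'s always exist (for instance $\beta_{i1k}^{(s)}=a_{ik}^{[s]}$ and $\beta_{ijk}^{(s)}=0$ for $j\neq1$ satisfies both conditions in (ii), since then $\sum_k\beta_{i1k}^{(s)}b_{kr}^{[t]}=\bar c_{ir}^{[s,t]}\geq0$), so the theorem produces a genuinely nonempty family of (generically time non-homogeneous) QSPs.
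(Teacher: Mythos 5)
Your proof is correct. For the $(1,2)$-stochasticity part you do exactly what the paper does: compute $\sum_j P_{ijr}^{[s,t]}=\bigl(A^{[s]}(A^{[t]})^{-1}\bigr)_{ir}$ from the first condition in (ii), invoke hypothesis (i) and the nonnegativity condition, and conclude via Lemma~\ref{l2} (equivalently, via the column-sum identity $\sum_{i,j}P_{ijr}^{[s,t]}=\sum_i\bar c_{ir}^{[s,t]}=1$). The difference is in the Kolmogorov--Chapman step: the paper simply cites Theorem~1 of \cite{LR} to assert that the family \eqref{ml} satisfies \eqref{KC} for the multiplication $a_0$, whereas you verify \eqref{eD} directly, summing first over the middle index of the second factor (using $\sum_n\beta_{knp}^{(\tau)}=a_{kp}^{[\tau]}$) and then over the contracted index (using $(A^{[\tau]})^{-1}A^{[\tau]}=I$) so that the composition telescopes to $\sum_l\beta_{ijl}^{(s)}b_{lr}^{[t]}=P_{ijr}^{[s,t]}$. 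This computation is exactly the content being imported from \cite{LR}, so your argument buys self-containedness at the cost of a few lines, while the paper's version is shorter and leans on the already-established general result about flows of algebras; your closing observation that admissible $\beta_{ijk}^{(s)}$ always exist (e.g.\ concentrating the mass at $j=1$) is a harmless extra not needed for the theorem but useful for confirming the construction is nonvacuous.
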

\begin{proof} By  \cite[Theorem 1]{LR} it is known that the matrix \eqref{ml} satisfies the equation \eqref{KC} for
the multiplication $a_0$. By the conditions (i), (ii) and Lemma~\ref{l2} we conclude that the matrix \eqref{ml} is (1,2)-stochastic.
Thus this matrix satisfies conditions \eqref{eD} and \eqref{e12}, i.e. is a QSP of type $(12|a_0)$.
\end{proof}
Since the inverse of a stochastic matrix may not be stochastic,
one wants to have an example of a family of matrices $A^{[t]}$ satisfying
conditions of Theorem~\ref{tA}.
The following proposition gives such an example.
\begin{pro}\label{p3} Let $m=2$ and suppose that the matrix $A^{[t]}$, $t\geq 0$, has the form
\[A^{[t]}=\begin{pmatrix}
a(t)& 1-b(t)\\
 1-a(t)& b(t)
\end{pmatrix},\]
where $a(t), b(t)\in (0,1)$ are arbitrary increasing (resp. decreasing) functions such that
$a(t)+b(t)>1$ (resp. $a(t)+b(t)<1$), $\forall t\in (0,1)$. Then the matrix $A^{[t]}$ satisfies condition (i) of Theorem~\ref{tA}.
\end{pro}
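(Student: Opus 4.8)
The plan is to compute $(A^{[t]})^{-1}$ and the product $\overline{\M}^{[s,t]}=A^{[s]}(A^{[t]})^{-1}$ in closed form, and then read off entrywise non-negativity and the column-sum condition directly. First I would note
\[
\det A^{[t]}=a(t)b(t)-(1-a(t))(1-b(t))=a(t)+b(t)-1,
\]
and set $D_t:=a(t)+b(t)-1$. In the increasing case the hypothesis $a(t)+b(t)>1$ forces $D_t>0$, and in the decreasing case $a(t)+b(t)<1$ forces $D_t<0$; in either case $A^{[t]}$ is invertible (so the hypothesis of Theorem~\ref{tA} is meaningful) and
\[
(A^{[t]})^{-1}=\frac{1}{D_t}\begin{pmatrix} b(t) & b(t)-1\\[1mm] a(t)-1 & a(t)\end{pmatrix}.
\]

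Next I would multiply out $A^{[s]}(A^{[t]})^{-1}$. Writing $\delta_a:=a(t)-a(s)$, $\delta_b:=b(t)-b(s)$ and using $a(s)b(t)-a(t)b(s)=a(s)\delta_b-b(s)\delta_a$, a short computation gives $\overline{\M}^{[s,t]}=\tfrac{1}{D_t}\,N$ with
\[
N=\begin{pmatrix}
a(s)\delta_b+(1-b(s))\delta_a+D_s & a(s)\delta_b+(1-b(s))\delta_a\\[1mm]
(1-a(s))\delta_b+b(s)\delta_a & (1-a(s))\delta_b+b(s)\delta_a+D_s
\end{pmatrix},
\]
where $D_s:=a(s)+b(s)-1$. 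Adding the two entries of each column of $N$, the $\delta_b$-terms sum to $\delta_b$, the $\delta_a$-terms sum to $\delta_a$, and one copy of $D_s$ remains, so each column of $N$ sums to $\delta_a+\delta_b+D_s=a(t)+b(t)-1=D_t$; hence after dividing by $D_t$ every column of $\overline{\M}^{[s,t]}$ sums to $1$.

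It remains to check $\overline{\M}^{[s,t]}\ge 0$ entrywise, and this is where the monotonicity hypothesis is used. Fix $s<t$. In the increasing case, $a,b$ increasing gives $\delta_a,\delta_b\ge 0$, and since $a(s),b(s)\in(0,1)$ we have $1-a(s),1-b(s)\ge 0$ and $D_s=a(s)+b(s)-1>0$; hence every entry of $N$ is $\ge 0$, and since $D_t>0$ the matrix $\overline{\M}^{[s,t]}=\tfrac{1}{D_t}N$ is non-negative. In the decreasing case $\delta_a,\delta_b\le 0$ and $D_s<0$, so every entry of $N$ is $\le 0$, while $D_t<0$, and again $\tfrac{1}{D_t}N\ge 0$. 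In both regimes $\overline{\M}^{[s,t]}$ is a non-negative matrix all of whose columns sum to $1$, i.e.\ it is left stochastic, which is precisely condition~(i) of Theorem~\ref{tA}.

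The computation is elementary; the only point requiring care is that the single pair of algebraic identities above must cover both monotonicity regimes, so one has to keep track of the signs of $\delta_a,\delta_b,D_s,D_t$ flipping together when passing from the increasing to the decreasing case — this is what makes the proof genuinely a two-case argument rather than a one-line one.
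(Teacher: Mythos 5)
Your proof is correct and follows essentially the same route as the paper: compute $\det A^{[t]}=a(t)+b(t)-1$, write down $(A^{[t]})^{-1}$ explicitly, form $A^{[s]}(A^{[t]})^{-1}$, and verify left stochasticity by checking that each column sums to $1$ and that all four entries are non-negative. The only (minor, and in fact welcome) difference is that you re-express the entries in terms of the increments $\delta_a,\delta_b$ and $D_s$, which makes the sign analysis uniform and lets you treat the increasing and decreasing regimes in one stroke, whereas the paper checks the four entrywise inequalities directly in the increasing case and dismisses the decreasing case as ``similar''.
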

\begin{proof} From condition $\det(A^{[t]})=a(t)+b(t)-1\ne 0$ it follows that  $A^{[t]}$ is invertible
for any $t\in (0,1)$. We shall prove that it satisfies the condition (i) of Theorem~\ref{tA}.
We have
\[(A^{[t]})^{-1}=\frac{1}{a(t)+b(t)-1} \begin{pmatrix}
b(t)& -1+b(t)\\
-1+a(t)& a(t)
\end{pmatrix}.\]
Using this equality we get
\begin{multline*}
A^{[s]}(A^{[t]})^{-1}=(\mathcal A^{[s,t]}_{ij})_{i,j=1,2}\\
=\frac{1}{a(t)+b(t)-1}\begin{pmatrix}
a(s)b(t)-(1-b(s))(1-a(t))& a(t)(1-b(s))-a(s)(1-b(t))\\[2mm]
b(t)(1-a(s))-b(s)(1-a(t))& a(t)b(s)-(1-a(s))(1-b(t))
\end{pmatrix}.
\end{multline*}
It is easy to see that $\mathcal A^{[s,t]}_{1j}+\mathcal A^{[s,t]}_{2j}=1$, $j=1,2$. Therefore it remains to check that
$\mathcal A^{[s,t]}_{ij}\geq 0$.  We assume $a(t)+b(t)>1$, $\forall t\in (0,1)$ (the case $a(t)+b(t)<1$ can be considered similarly).
Then, since $0<a(t),b(t), 1-a(t),1-b(t)<1$ for all $t\geq 0$, we have
\begin{itemize}
\item[-] the  inequality $\mathcal A^{[s,t]}_{11}\geq 0$ is equivalent to $a(s)b(t)-\big(1-b(s)\big)\big(1-a(t)\big)\geq 0$
which is true since by our assumption we have $a(s)> 1-b(s)$ and $b(t)>1-a(t)$.
\item[-] the inequality $\mathcal A^{[s,t]}_{21}\geq 0$ is equivalent to $\frac{b(t)}{1-a(t)}\geq \frac{b(s)}{1-a(s)}$,
for all $s<t$. The last inequality follows from our condition that $a(t)>a(s)$ and $b(t)>b(s)$, for all $t>s$ (increasing functions).
\item[-] the inequality $\mathcal A^{[s,t]}_{12}\geq 0$ is equivalent to $\frac{a(t)}{1-b(t)}\geq \frac{a(s)}{1-b(s)}$,
for all $s<t$. The last inequality follows again from the condition that $a$ and $b$ are increasing functions.
\item[-] the  inequality $\mathcal A^{[s,t]}_{22}\geq 0$ is equivalent to $a(t)b(s)-\big(1-a(s)\big)\big(1-b(t)\big)\geq 0$
which is true since by our assumption we have $a(t)> 1-b(t)$ and $b(s)>1-a(s)$.
\end{itemize}
 This completes the proof.
\end{proof}
Denote
\[\alpha(s)=\beta_{111}^{(s)},\qquad \beta(s)=\beta_{112}^{(s)}, \qquad \gamma(s)=\beta_{211}^{(s)}, \qquad \delta(s)=\beta_{212}^{(s)}.\]
Using Theorem~\ref{tA} and Proposition~\ref{p3} we construct the
following cubic matrix
\begin{multline*}
\mathcal N^{[s,t]}= \left(P_{ijk}^{[s,t]}\right)=\frac{1}{a(t)+b(t)-1} \; \cdot\\
\left(\begin{array}{cc}
\alpha(s)b(t)+\beta(s)(a(t)-1)& \alpha(s)(b(t)-1)+\beta(s)a(t)\\[2mm]
(a(s)-\alpha(s))b(t)+(1-b(s)-\beta(s))(a(t)-1)& (a(s)-\alpha(s))(b(t)-1)+(1-b(s)-\beta(s))a(t)\\[3mm]
\end{array}
\right.\\
\vline \left.\begin{array}{cc}
\gamma(s)b(t)+\delta(s)(a(t)-1)& \gamma(s)(b(t)-1)+\delta(s)a(t)\\[2mm]
(1-a(s)-\gamma(s))b(t)+(b(s)-\delta(s))(a(t)-1)& (1-a(s)-\gamma(s))(b(t)-1)+(b(s)-\delta(s))a(t)
\end{array}
\right).
\end{multline*}

The following proposition illustrates Theorem~\ref{tA}.

\begin{pro} Let $a(t), b(t)\in (0,1)$ be functions such that $a(t)+b(t)-1>0$ for any $t\geq 0$.
The family of matrices $\mathcal N^{[s,t]}$ is a QSP of type $(12|a_0)$
if and only if the functions $\alpha(s)$, $\beta(s)$, $\gamma(s)$ and $\delta(s)$ satisfy the following
\begin{align}
0 &\leq \alpha(s)\leq a(s), \qquad  \quad   \ \; 0 \leq \beta(s)\leq 1-b(s), \ \  \ \forall s\geq 0; \label{albe} \\
0& \leq \gamma(s)\leq 1-a(s), \qquad     0 \leq \delta(s)\leq b(s), \qquad \quad  \forall s\geq 0. \label{gd}
\end{align}
\end{pro}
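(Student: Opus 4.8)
The plan is to reduce the claim to a positivity question and then split into the two implications, using that $\mathcal N^{[s,t]}$ is precisely the matrix \eqref{ml} of Theorem~\ref{tA} built from the matrices $A^{[t]}$ of Proposition~\ref{p3} with $\beta_{111}^{(s)}=\alpha(s)$, $\beta_{112}^{(s)}=\beta(s)$, $\beta_{211}^{(s)}=\gamma(s)$, $\beta_{212}^{(s)}=\delta(s)$, the remaining four numbers $\beta_{ijk}^{(s)}$ being fixed by the first relation in Theorem~\ref{tA}(ii). Since $\mathcal N^{[s,t]}$ has the form \eqref{ml}, it automatically satisfies the Kolmogorov--Chapman equation \eqref{eD} for the $a_0$-multiplication (this is \cite[Theorem~1]{LR}, already invoked in the proof of Theorem~\ref{tA}). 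Moreover, summing its layers via \eqref{cd} and using $\sum_j\beta_{ijk}^{(s)}=a_{ik}^{[s]}$ gives $\overline{\mathcal N}^{[s,t]}=A^{[s]}(A^{[t]})^{-1}$, whose columns sum to $1$ since those of $A^{[t]}$, and hence of $(A^{[t]})^{-1}$, do; therefore $\sum_{i,j}P_{ijr}^{[s,t]}=1$ for $r=1,2$. By Lemma~\ref{l2} and the definition of a QSP it thus remains only to determine when $P_{ijr}^{[s,t]}\ge0$ for all $i,j,r$ and all $0\le s<t$.

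For the necessity of \eqref{albe}--\eqref{gd} I would fix $i$ and look at the $2\times2$ block $(P_{ijr}^{[s,t]})_{j,r=1}^{2}$, which by \eqref{ml} factors as $B_i^{(s)}(A^{[t]})^{-1}$ with $B_i^{(s)}=(\beta_{ijk}^{(s)})_{j,k}$, i.e. \[ B_1^{(s)}=\begin{pmatrix}\alpha(s)&\beta(s)\\ a(s)-\alpha(s)&1-b(s)-\beta(s)\end{pmatrix},\qquad B_2^{(s)}=\begin{pmatrix}\gamma(s)&\delta(s)\\ 1-a(s)-\gamma(s)&b(s)-\delta(s)\end{pmatrix}. \] Right-multiplying by $A^{[t]}$ recovers $B_i^{(s)}=(P_{ijr}^{[s,t]})_{j,r}\,A^{[t]}$, and since every entry of $A^{[t]}$ lies in $(0,1)$, non-negativity of the block forces all four entries of $B_i^{(s)}$ to be $\ge0$. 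For $i=1$ this is exactly $0\le\alpha(s)\le a(s)$ and $0\le\beta(s)\le1-b(s)$, and for $i=2$ it is $0\le\gamma(s)\le1-a(s)$ and $0\le\delta(s)\le b(s)$; hence a QSP of type $(12|a_0)$ forces \eqref{albe}--\eqref{gd}.

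For the converse I would assume \eqref{albe}--\eqref{gd} and verify the eight scalar inequalities $P_{ijr}^{[s,t]}\ge0$ directly, in the two independent blocks (the entries with $i=1$ involve only $\alpha,\beta,a,b$, those with $i=2$ only $\gamma,\delta,a,b$). After cancelling the positive factor $a(t)+b(t)-1$ each inequality becomes a comparison such as $\alpha(s)b(t)\ge\beta(s)\bigl(1-a(t)\bigr)$ or $\bigl(a(s)-\alpha(s)\bigr)b(t)\ge\bigl(1-b(s)-\beta(s)\bigr)\bigl(1-a(t)\bigr)$, to be checked using the bounds of \eqref{albe}--\eqref{gd}, the constraints $a(t),b(t)\in(0,1)$ and $a(t)+b(t)>1$ (the latter being what guarantees $\det A^{[t]}>0$), and, where needed, the monotonicity of $a,b$ from Proposition~\ref{p3}. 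I expect this step to be the main obstacle: because $(A^{[t]})^{-1}$ is \emph{not} an entrywise non-negative matrix --- its off-diagonal entries $\frac{b(t)-1}{a(t)+b(t)-1}$ and $\frac{a(t)-1}{a(t)+b(t)-1}$ are negative --- the non-negativity of $B_i^{(s)}$ does not transfer formally to the product $B_i^{(s)}(A^{[t]})^{-1}$, so the verification must exploit the precise sign pattern of $(A^{[t]})^{-1}$ and the exact endpoints in \eqref{albe}--\eqref{gd}, checking the four inequalities in each block jointly and for every $t>s$ rather than appealing to any soft positivity principle.
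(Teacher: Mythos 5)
Your reduction to positivity and your necessity argument are correct, and in fact cleaner than the paper's. The paper obtains \eqref{albe} by writing the four inequalities $P_{1jr}^{[s,t]}\geq 0$ as a two-sided system in $\alpha(s),\beta(s)$, dividing by $1-a(t)>0$ and $a(t)>0$ and summing (and similarly for \eqref{gd}); your observation that $(P_{ijr}^{[s,t]})_{j,r}=B_i^{(s)}(A^{[t]})^{-1}$, hence $B_i^{(s)}=(P_{ijr}^{[s,t]})_{j,r}A^{[t]}$ with $A^{[t]}$ entrywise positive, gives the same conclusion in one line. The stochasticity of the column sums via \eqref{cd} and Lemma~\ref{l2}, and the appeal to \cite[Theorem 1]{LR} for the Kolmogorov--Chapman equation, also match the paper.

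The genuine gap is the converse, which you explicitly leave unverified --- and rightly so, because it cannot be closed as stated: the bounds \eqref{albe}--\eqref{gd} do \emph{not} imply $P_{ijr}^{[s,t]}\geq 0$. Take, for instance, $\alpha(s)=0$ and $\beta(s)=1-b(s)>0$ (allowed by \eqref{albe}); then
\[
P_{111}^{[s,t]}=\frac{\alpha(s)b(t)+\beta(s)\bigl(a(t)-1\bigr)}{a(t)+b(t)-1}=-\frac{\bigl(1-b(s)\bigr)\bigl(1-a(t)\bigr)}{a(t)+b(t)-1}<0,
\]
even when $a,b$ are increasing with $a(t)+b(t)>1$ as in Proposition~\ref{p3}. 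So \eqref{albe}--\eqref{gd} are necessary but not sufficient; genuine sufficiency would need conditions coupling $\alpha(s),\beta(s)$ (resp.\ $\gamma(s),\delta(s)$) to the later-time values $a(t),b(t)$, such as $\alpha(s)b(t)\geq\beta(s)(1-a(t))$ for all $t>s$, which is exactly the second requirement in Theorem~\ref{tA}(ii). You should be aware that the paper's own proof has the same defect: it reduces $P_{ijr}^{[s,t]}\geq 0$ to the equivalent two-sided system and then only \emph{derives} \eqref{albe}--\eqref{gd} by the summation trick (which loses information); the reverse implication is never checked, so only the ``only if'' half is actually proved there. Your instinct that no soft positivity principle will finish the ``if'' direction is therefore well founded: that direction is not merely hard, it is false as stated.
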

\begin{proof}
It is easy to see that the matrix $\mathcal N^{[s,t]}$ satisfies $\sum_{i,j}P_{ijk}^{[s,t]}=1$, for $k=1,2$.
Therefore we shall show that $P_{ijk}^{[s,t]}\geq 0$.
The system of inequalities $P_{1ij}^{[s,t]}\geq 0$, $i,j=1,2$,
is equivalent to the following system of inequalities with respect to $\alpha(s)$ and $\beta(s)$:
\begin{align*}
0 & \leq b(t)\alpha(s)-(1-a(t))\beta(s)\leq a(s)b(t)-(1-a(t))(1-b(s)),\\
 0 & \leq -(1-b(t))\alpha(s)+a(t)\beta(s)\leq -a(s)(1-b(t))+a(t)(1-b(s)).
\end{align*}

By dividing the first inequalities by $1-a(t)>0$ and by dividing the second inequalities by $a(t)>0$ and
by summing the resulting inequalities, we get
\[ 0\leq \frac{a(t)+b(t)-1}{(1-a(t))a(t)}\alpha(s)\leq a(s)   \frac{a(t)+b(t)-1}{(1-a(t))a(t)}.\]
Since $\frac{a(t)+b(t)-1}{(1-a(t))a(t)}>0$, we get the first inequalities of \eqref{albe}.
 The second inequalities of \eqref{albe} can be obtained similarly.

Now the system of inequalities $P_{2ij}^{[s,t]}\geq 0$, $i,j=1,2$,
is equivalent to the following system of inequalities with respect to $\gamma(s)$ and $\delta(s)$:
\begin{align*}
0 & \leq \ b(t)\gamma(s)-(1-a(t))\delta(s) \ \leq (1-a(s))b(t)-(1-a(t))b(s),\\
 0 & \leq -(1-b(t))\gamma(s)+a(t)\delta(s)\leq -(1-a(s))(1-b(t))+a(t)b(s).
\end{align*}
By dividing the first (resp. second) inequalities by $1-a(t)>0$ (resp. $a(t)>0$) and
by summing the resulting inequalities, we get
\[ 0\leq \frac{a(t)+b(t)-1}{(1-a(t))a(t)}\gamma(s)\leq (1-a(s))   \frac{a(t)+b(t)-1}{(1-a(t))a(t)}.\]
Again since $\frac{a(t)+b(t)-1}{(1-a(t))a(t)}>0$, we get the first inequalities of \eqref{gd}.
The second inequalities of \eqref{gd} can be obtained similarly.
\end{proof}

\section{An application to a population with possibility of twin birth}\label{bio}

For better biological interpretation, we renumber the set $I$, starting at 0 instead of 1. Consider the set $I=\{0,1,2\}$ as the set of types in a population.
The element 0 will play the role of an ``empty body'', the element 1 represents a ``female'',
while the element 2 is a ``male''.

Consider $P_{ijk}^{[s,t]}$ as the conditional
probability $P^{[s,t]}(i,j|k)$ that a member of
type $k\in I$ starts its ``pregnancy'' period at
time $s$ and finishes at time $t$ with zero (in case $i=j=0$),
one (in case $i=0$ or $j=0$, $i+j\ne 0$)
or two (in case $i\ne 0$ and $j\ne 0$, i.e. with a twin)
offspring of $i$th and $j$th types.

Then it is natural to define $P^{[s,t]}_{ijk}$ as follows:
\begin{align*}
P^{[s,t]}_{ij0}&=\begin{cases}
1, & \ \ \text{if} \ \ i=j=0,\\[2mm]
0, & \ \ \text{otherwise};
\end{cases} \\
P^{[s,t]}_{ij1} & \geq 0, \ \ \text{for all} \ \ i,j\in I,
\end{align*}
here $P^{[s,t]}_{001}$ can be strictly positive, which corresponds,
for example, to the case in which the female 1 cannot have a
child, because ``she'' is ill.
\[
P^{[s,t]}_{ij2}=\begin{cases}
1,& \ \ \text{if} \ \ i=j=0,\\[2mm]
0, &\ \ \text{otherwise}.
\end{cases}
\]
Hence the cubic matrix $\M^{[s,t]}=\left(P^{[s,t]}_{ijk} \right)_{i,j,k=0}^{\ \underset{2}{}}$ has the following form:
\begin{equation}\label{000}
\M^{[s,t]}= \begin{pmatrix}
1 &a^{[s,t]}& 1 &\vline &0 &\alpha^{[s,t]}&0& \vline &0&u^{[s,t]}&0\\
0 &b^{[s,t]}& 0 &\vline &0 &\beta^{[s,t]} &0& \vline &0&v^{[s,t]}&0\\
0 &c^{[s,t]}& 0 &\vline &0 &\gamma^{[s,t]}&0& \vline &0&w^{[s,t]}&0
\end{pmatrix}.
\end{equation}
It is easy to check that the functions $P^{[s,t]}_{ijk}$, for $k=0$ and $k=2$, satisfy
equation \eqref{eD} and condition \eqref{e12}, where one should use sums for $i,j=0,1,2$.
The equation \eqref{eD} for  $P^{[s,t]}_{ij1}$, taking into account the matrix \eqref{000},
can be written as the following system of nine equations
\begin{align}\label{ast}
a^{[s,t]}&=a^{[\tau,t]}+b^{[\tau,t]}+c^{[\tau,t]}+a^{[s,\tau]}\left(\alpha^{[\tau,t]}+\beta^{[\tau,t]}+\gamma^{[\tau,t]}\right)+
u^{[\tau,t]}+v^{[\tau,t]}+w^{[\tau,t]}, \\
\label{bst}
b^{[s,t]}&=b^{[s,\tau]}\left(\alpha^{[\tau,t]}+\beta^{[\tau,t]}+\gamma^{[\tau,t]}\right), \qquad c^{[s,t]}=c^{[s,\tau]}\left(\alpha^{[\tau,t]}+\beta^{[\tau,t]}+\gamma^{[\tau,t]}\right); \\
\alpha^{[s,t]}&=\alpha^{[s,\tau]}\left(\alpha^{[\tau,t]}+\beta^{[\tau,t]}+\gamma^{[\tau,t]}\right), \qquad \beta^{[s,t]}=\beta^{[s,\tau]}\left(\alpha^{[\tau,t]}+\beta^{[\tau,t]}+\gamma^{[\tau,t]}\right), \notag \\
\label{alphast}
\gamma^{[s,t]}&=\gamma^{[s,\tau]}\left(\alpha^{[\tau,t]}+\beta^{[\tau,t]}+\gamma^{[\tau,t]}\right); \\
u^{[s,t]}&=u^{[s,\tau]}\left(\alpha^{[\tau,t]}+\beta^{[\tau,t]}+\gamma^{[\tau,t]}\right), \qquad v^{[s,t]}=v^{[s,\tau]}\left(\alpha^{[\tau,t]}+\beta^{[\tau,t]}+\gamma^{[\tau,t]}\right), \notag \\
\label{ust}
w^{[s,t]}&=w^{[s,\tau]}\left(\alpha^{[\tau,t]}+\beta^{[\tau,t]}+\gamma^{[\tau,t]}\right).
\end{align}
%\begin{equation}\label{ast}
%a^{[s,t]}=a^{[\tau,t]}+b^{[\tau,t]}+c^{[\tau,t]}+a^{[s,\tau]}\left(\alpha^{[\tau,t]}+\beta^{[\tau,t]}+\gamma^{[\tau,t]}\right)+
%u^{[\tau,t]}+v^{[\tau,t]}+w^{[\tau,t]},
%\end{equation}
%\begin{equation}\label{bst}
%b^{[s,t]}=b^{[s,\tau]}\left(\alpha^{[\tau,t]}+\beta^{[\tau,t]}+\gamma^{[\tau,t]}\right), \ \ \ c^{[s,t]}=c^{[s,\tau]}\left(\alpha^{[\tau,t]}+\beta^{[\tau,t]}+\gamma^{[\tau,t]}\right);
%\end{equation}
% \[
%\alpha^{[s,t]}=\alpha^{[s,\tau]}\left(\alpha^{[\tau,t]}+\beta^{[\tau,t]}+\gamma^{[\tau,t]}\right), \ \ \beta^{[s,t]}=\beta^{[s,\tau]}\left(\alpha^{[\tau,t]}+\beta^{[\tau,t]}+\gamma^{[\tau,t]}\right), \]
%\begin{equation}\label{alphast}
%\gamma^{[s,t]}=\gamma^{[s,\tau]}\left(\alpha^{[\tau,t]}+\beta^{[\tau,t]}+\gamma^{[\tau,t]}\right);
%\end{equation}
%\[
%u^{[s,t]}=u^{[s,\tau]}\left(\alpha^{[\tau,t]}+\beta^{[\tau,t]}+\gamma^{[\tau,t]}\right), \ \ v^{[s,t]}=v^{[s,\tau]}\left(\alpha^{[\tau,t]}+\beta^{[\tau,t]}+\gamma^{[\tau,t]}\right), \]
%\begin{equation}\label{ust}
%w^{[s,t]}=w^{[s,\tau]}\left(\alpha^{[\tau,t]}+\beta^{[\tau,t]}+\gamma^{[\tau,t]}\right).
%\end{equation}
Now we shall solve this system of two-variable-functional equations. By condition \eqref{e12} we should only consider
non-negative solutions which for any $0\leq s<t$ satisfy
\begin{equation}\label{bir}
a^{[s,t]}+b^{[s,t]}+c^{[s,t]}+\alpha^{[s,t]}+\beta^{[s,t]}+\gamma^{[s,t]}+u^{[s,t]}+v^{[s,t]}+w^{[s,t]}=1.
\end{equation}
Denoting
\[f(s,t)= \alpha^{[s,t]}+\beta^{[s,t]}+\gamma^{[s,t]}\]
from system \eqref{alphast} we get
 \[f(s,t)=f(s,\tau)f(\tau,t).\]
This equation is
known as Cantor's second equation which has a
very rich family of solutions:
\begin{itemize}
  \item[(a)] $f(s,t)\equiv 0$;
  \item[(b)] $f(s,t)=\frac{\Phi(t)}{\Phi(s)}$, where $\Phi$ is an arbitrary
function with $\Phi(s)\ne 0$;
   \item[(c)] \[f(s,t)=\begin{cases}
1, & \text{if} \  \ s\leq t<a,\\[2mm]
0, &  \text{if} \ \ t\geq a.\\
\end{cases} \quad \text{where} \ \ a>0.\]
\end{itemize}

\textsl{Case of the solution (a)}: In this case we get from system \eqref{ast}--\eqref{ust} and \eqref{bir} that
\[ a^{[s,t]}\equiv 1, \qquad  b^{[s,t]}=c^{[s,t]}=\alpha^{[s,t]}=\beta^{[s,t]}=\gamma^{[s,t]}=u^{[s,t]}=v^{[s,t]}=w^{[s,t]}\equiv 0.\]
Thus we constructed a QSP of type $(12|a_0)$. To give a biological interpretation, let us compute distributions
$x^{(t)}=(x^{(t)}_0, x^{(t)}_1, x^{(t)}_2)$. By using formula \eqref{xt12} we get
\begin{align*}
x^{(t)}_0& =\frac{1}{2} \sum_{i,j=0}^2\left(P_{0ij}^{[s,t]}+P_{i0j}^{[s,t]}\right)x^{(s)}_j= x^{(s)}_0+x^{(s)}_1+x^{(s)}_2=1,\\
 x^{(t)}_1 &=x^{(t)}_2=0,  \qquad \quad 0\leq s<t.
\end{align*}
 Since $x^{(s)}_0=1$ is the probability to have $0$ type, the biological
interpretation of the process is clear: independently on initial distribution $x^{(s)}$, the population will die
as soon as the time $t>s$ turns on.

\textsl{Case of the solution (b)}: In this case from system \eqref{bst} we get
\[b^{[s,t]}=b^{[s,\tau]}\left(\alpha^{[\tau,t]}+\beta^{[\tau,t]}+\gamma^{[\tau,t]}\right)
=b^{[s,\tau]}f(\tau,t)=b^{[s,\tau]}\frac{\Phi(t)}{\Phi(\tau)},\] consequently,
\[\frac{b^{[s,t]}}{\Phi(t)}=\frac{b^{[s,\tau]}}{\Phi(\tau)}.\]
From the last equality it follows that the function $\frac{b^{[s,t]}}{\Phi(t)}$ should not depend on $t$, i.e. there exists
a function, say $b(s)$, such that
\[\frac{b^{[s,t]}}{\Phi(t)}=b(s) \ \ \ \Rightarrow \ \ \ b^{[s,t]}=b(s)\Phi(t).\]
Similarly one can prove that there are functions $c(s), \alpha(s), \beta(s), \dots, w(s)$ such that
\begin{align}\label{aab}
c^{[s,t]}&=c(s)\Phi(t), \quad \alpha^{[s,t]}=\alpha(s)\Phi(t), \quad \beta^{[s,t]}=\beta(s)\Phi(t), \quad
\gamma^{[s,t]}=\gamma(s)\Phi(t),\notag{} \\
u^{[s,t]}&=u(s)\Phi(t), \quad v^{[s,t]}=v(s)\Phi(t), \quad w^{[s,t]}=w(s)\Phi(t).
\end{align}
By definition of $f(s,t)$ we shall have
\[f(s,t)=\alpha^{[s,t]}+\beta^{[s,t]}+\gamma^{[s,t]}=\Phi(t)(\alpha(s)+\beta(s)+\gamma(s))=\frac{\Phi(t)}{\Phi(s)},\]
i.e.
\begin{equation}\label{psi}
\alpha(s)+\beta(s)+\gamma(s)=\frac{1}{\Phi(s)}.
\end{equation}
By using equalities \eqref{aab} and \eqref{psi} from \eqref{ast} we get
\[a^{[s,t]}=a^{[\tau,t]}+a^{[s,\tau]}\frac{\Phi(t)}{\Phi(\tau)}+\Phi(t)\big(b(\tau)+c(\tau)+u(\tau)+v(\tau)+w(\tau)\big).\]
Denoting $g(s,t)=\frac{a^{[s,t]}}{\Phi(t)}$ from the last equation we get
\[g(s,t)=g(s,\tau)+g(\tau,t)+b(\tau)+c(\tau)+u(\tau)+v(\tau)+w(\tau).\]
This equation has the following solution\footnote{The equation $g(s,t)=g(s,\tau)+g(\tau,t)$ is known as Cantor's first equation.
It is easy to check that this equation has very rich class of solutions, i.e. $g(s,t)=\kappa(t)-\kappa(s)$ is a solution for an \textsl{arbitrary} function $\kappa$. For Cantor's first and second equations, see http://eqworld.ipmnet.ru/en/solutions/eqindex/eqindex-fe.htm.}
\[g(s,t)=\kappa(t)-\kappa(s)-b(s)-c(s)-u(s)-v(s)-w(s), \]
where $\kappa(t)$ is an arbitrary function. Consequently, we get
\[a^{[s,t]}=\Phi(t)\Big(\kappa(t)-\kappa(s)-b(s)-c(s)-u(s)-v(s)-w(s)\Big).\]
Thus we obtained a solution of the system \eqref{ast}--\eqref{ust}, for which the
condition \eqref{bir} has the form
\[
\Phi(t)\Big(\kappa(t)-\kappa(s)+\frac{1}{\Phi(s)}\Big)=1,
\] i.e.
\[\kappa(s)-\frac{1}{\Phi(s)}=\kappa(t)-\frac{1}{\Phi(t)}.\]
This equality says that the function  $\kappa(t)-\frac{1}{\Phi(t)}$
should not depend on $t$, i.e. there is a constant $K$ such that
\[\kappa(t)=K+\frac{1}{\Phi(t)}.\]
Thus
\[a^{[s,t]}=\Phi(t)\Big(\frac{1}{\Phi(t)}-\frac{1}{\Phi(s)}-b(s)-c(s)-u(s)-v(s)-w(s)\Big).\]
Now we are ready to write an explicit formula for the corresponding cubic matrix:

\begin{multline} \label{0m}
\M^{[s,t]}=\Phi(t) \left(\begin{array}{cccccccccccc}
1 &\frac{1}{\Phi(t)}-\frac{1}{\Phi(s)}-b(s)-c(s)-u(s)-v(s)-w(s)& 1 &\\
0 &b(s)& 0 &\\
0 &c(s)& 0 &
\end{array}
\right. \\
\vline \left.
\begin{array}{cccccccccccc}
&0 &\alpha(s)&0& \vline &0&u(s)&0\\
&0 &\beta(s) &0& \vline &0&v(s)&0\\
&0 &\frac{1}{\Phi(s)}-\alpha(s)-\beta(s)&0& \vline &0&w(s)&0
\end{array}
\right).
\end{multline}
Thus we have proved the following.
 \begin{pro} The family of matrices $\mathcal M^{[s,t]}$, \eqref{0m}, is a QSP of type $(12|a_0)$ if and only if $b(t), c(t),\alpha(t),\beta(t),u(t),v(t),w(t) \in [0,1]$, $\Psi(t)>0$, are arbitrary functions
 such that
 \begin{align*}
\frac{1}{\Phi(t)}-\frac{1}{\Phi(s)} & \geq b(s)+c(s)+u(s)+v(s)+w(s),\\
\alpha(t)+\beta(t) & \leq \frac{1}{\Phi(t)}, \quad \text{for all} \ \ 0\leq s<t.
 \end{align*}
 \end{pro}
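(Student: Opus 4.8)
The plan is to verify directly the two requirements in the definition of a QSP of type $(12|a_0)$ for the family \eqref{0m}: that it satisfies the Kolmogorov--Chapman equation \eqref{eD}, and that each cubic matrix $\M^{[s,t]}$ is $(1,2)$-stochastic in the sense of \eqref{e12}. The KCE part is in fact already secured by the construction: \eqref{0m} was produced by solving the functional system \eqref{ast}--\eqref{ust} in Case~(b), i.e. using $f(s,t)=\Phi(t)/\Phi(s)$, the relations \eqref{aab}, and the formula for $a^{[s,t]}$ obtained from Cantor's first equation; substituting these expressions back into each of the nine equations \eqref{ast}--\eqref{ust} confirms \eqref{eD}. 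So the whole content of the proposition is the characterization of when \eqref{e12} holds, and the equivalence will be read off from that analysis.

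For the normalization part of \eqref{e12}, I would treat the three index-$k$ layers separately. For $k=0$ and $k=2$ the layers of $\M^{[s,t]}$ coincide with those of \eqref{000}: each has a single entry equal to $1$ and all others $0$, so $\sum_{i,j}P^{[s,t]}_{ij0}=1$ and $\sum_{i,j}P^{[s,t]}_{ij2}=1$, and those entries are manifestly non-negative. For $k=1$, summing the nine middle-column entries displayed in \eqref{0m} and collecting terms, the contributions of $b(s),c(s),u(s),v(s),w(s)$ cancel in pairs, those of $\alpha(s),\beta(s)$ cancel, and what remains equals $1$; thus $\sum_{i,j}P^{[s,t]}_{ij1}=1$ holds identically, for every choice of the parameter functions. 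This is exactly why, once $\kappa(t)=K+1/\Phi(t)$ had been forced, the normalization \eqref{bir} imposed no additional restriction; in particular it imposes none here, so the normalization part of \eqref{e12} is free.

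What remains is to pin down when the $k=1$ layer of \eqref{0m} is entrywise non-negative. Since $\Phi(t)>0$, this is equivalent to the non-negativity of the coefficient functions carried by that layer: $b(s),c(s),u(s),v(s),w(s)\ge 0$, $\alpha(s),\beta(s)\ge 0$, the $\gamma$-entry condition $\alpha(s)+\beta(s)\le 1/\Phi(s)$, and the $a$-entry condition $1/\Phi(t)-1/\Phi(s)\ge b(s)+c(s)+u(s)+v(s)+w(s)$ for all $0\le s<t$. These are precisely the conditions in the statement (the requirement $b,c,\alpha,\beta,u,v,w\in[0,1]$ being the probabilistic normalization of these parameter functions together with the non-negativity just obtained, and the last two inequalities being the two displayed conditions). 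Conversely, under these conditions every entry of \eqref{0m} is non-negative and, by the previous paragraph, the sums in \eqref{e12} all equal $1$; since \eqref{eD} also holds, \eqref{0m} is a QSP of type $(12|a_0)$ by definition.

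I do not anticipate a genuine obstacle: the substantive work, reducing \eqref{ast}--\eqref{ust} via Cantor's first and second equations to the explicit form \eqref{0m}, has already been carried out before the statement, and what is left is the non-negativity and normalization bookkeeping. The one point needing mild care is the observation that \eqref{bir} is automatically satisfied by the form \eqref{0m}, so that the only effective constraints are the two displayed inequalities, understood as holding for every admissible pair $s<t$.
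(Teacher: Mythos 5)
Your proposal is correct and follows essentially the same route as the paper: the derivation preceding the proposition already yields \eqref{eD} and the normalization in \eqref{e12} (the latter having been built in when \eqref{bir} forced $\kappa(t)=K+1/\Phi(t)$), so the proposition reduces to reading off the entrywise non-negativity of the $k=1$ layer of \eqref{0m}, which, since $\Phi(t)>0$, is exactly the two displayed inequalities together with non-negativity of the parameter functions. This is precisely what the paper means by ``Thus we have proved the following.''
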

Time behavior of this QSP depends on fixed functions.
Let us give some interesting interpretations:
\begin{itemize}
\item[-]
Assume the following limit exists
\[\lim_{t\to\infty}\Phi(t)=\Phi(\infty)>0.\]
In this case, if, for example, $\beta(s)>0$ then
the population has a positive
probability, $P_{111}^{[s,\infty]}=\Phi(\infty)\beta(s)>0$ to have twins (i.e. female-female twins).
In this case, the condition $\gamma(s)+v(s)>0$ gives positive probability  $\Phi(\infty)(\gamma(s)+v(s))$
of having female-male twins, and similarly if $w(s)>0$ then $\Phi(\infty)w(s)>0$ is the positive probability
of male-male twins.

\item[-] If at the initial time $s$ some probability is positive, for example, $\Psi(t)\beta(s)>0$, then
during all fixed time $t$, with $t>s$, this probability remains positive.
For this example, it means that if initially, the population had possibility,
to have a female-female twin, then it will have this possibility always, although this might not be true in the limiting case.
\item[-] If $\Phi(\infty)=0$ then the population asymptotically dies.
\item[-] It is known that the human twin birth rate is about $2$ percent of standard one-child birth. This can be used
to choose our parameter functions. For example, one can take $\beta(t)=0.02\alpha(t)$, etc.
\end{itemize}
\textsl{Case of the solution (c)}: In this case for $t<a$ we have
\[ \alpha^{[\tau,t]}+\beta^{[\tau,t]}+\gamma^{[\tau,t]}=1, \qquad  0\leq \tau<t<a.\]
Then from the system \eqref{bst}--\eqref{ust} we get
 that there are functions $b_0(s), c_0(s), \dots, w_0(s)$ such that
\begin{align*}
b^{[s,t]}&=b_0(s), \quad c^{[s,t]}=c_0(s), \quad \, \alpha^{[s,t]}=\alpha_0(s), \quad \beta^{[s,t]}=\beta_0(s),\quad
\gamma^{[s,t]}=\gamma_0(s),\\
u^{[s,t]}&=u_0(s), \quad  v^{[s,t]}=v_0(s), \quad w^{[s,t]}=w_0(s), \qquad \quad  0\leq \tau<t<a.
\end{align*}
Then by equation \eqref{ast} we get that there is $\kappa_0(t)$ such that
\[a^{[s,t]}=\kappa_{0}(t)-\kappa_0(s)-b_0(s)-c_0(s)-u_0(s)-v_0(s)-w_0(s).\]
Then by condition \eqref{bir} we get that $\kappa_0(t)=\kappa_0(s)$. To make the corresponding
matrix a (1,2)-stochastic we need $b_0(s),c_0(s),u_0(s),v_0(s),w_0(s)\in [0,1]$ and by previous results
we get
\[a^{[s,t]}=-b_0(s)-c_0(s)-u_0(s)-v_0(s)-w_0(s),\]
which is non-negative if and only if
\[b_0(s)=c_0(s)=u_0(s)=v_0(s)=w_0(s)\equiv 0.\]
Thus, for $t<a$, the cubic matrix has the following form
\[
\M_1^{[s,t]}=\begin{pmatrix}
1 &0& 1 &\vline&0 &\alpha_0(s)&0& \vline &0&0&0\\
0 &0& 0 &\vline&0 &\beta_0(s) &0& \vline &0&0&0\\
0 &0& 0 &\vline&0 &1-\alpha_0(s)-\beta_0(s)&0& \vline &0&0&0
\end{pmatrix}.
\]
The case $t\geq a$ is simpler, because in this case $f(s,t)=0$ and the solution of the system
is
\[a^{[s,t]}=1, \qquad  b^{[s,t]}=\dots=w^{[s,t]}=0.\]
Thus, for $t\geq a$, the cubic matrix has the following form
\[
\M_0^{[s,t]}=
\begin{pmatrix}
1 &1& 1 & \vline & 0 &0&0& \vline &0&0&0 \\
0 &0& 0 & \vline &0 &0&0 & \vline &0&0&0 \\
0 &0& 0 & \vline &0 &0&0 & \vline &0&0&0 \\
\end{pmatrix}.
\]
Define now
\begin{equation}\label{mc}
\M^{[s,t]}=\begin{cases}
\M_1^{[s,t]}, & \ \ \text{if}\ \ s\leq t<a,\\[2mm]
\M_0^{[s,t]}, & \ \ \text{if} \ \ t\geq a,\\
\end{cases}\ \ \text{where} \ \ a>0.
\end{equation}
Thus we have proved the following.
 \begin{pro} The family of matrices $\mathcal M^{[s,t]}$, \eqref{mc}, is a QSP of type $(12|a_0)$
 if and only if $\alpha_0(t),\beta_0(t)\in [0,1]$, are arbitrary functions
 such that
 \[\alpha_0(t)+\beta_0(t)\leq 1, \quad \text{for all} \ \ 0\leq s<t<a.\]
 \end{pro}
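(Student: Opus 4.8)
The plan is to verify the two defining properties of a QSP of type $(12|a_0)$ --- the $(1,2)$-stochasticity of $\M^{[s,t]}$ and the Kolmogorov--Chapman equation \eqref{eD} --- separately. For stochasticity, a glance at \eqref{mc} gives $\sum_{i,j}P_{ijk}^{[s,t]}=1$ for every $k\in\{0,1,2\}$ in both branches of \eqref{mc}; the only non-obvious check is the $k=1$ column of $\M_1^{[s,t]}$, whose three nonzero entries $\alpha_0(s)$, $\beta_0(s)$, $1-\alpha_0(s)-\beta_0(s)$ sum to $1$. Hence $\M^{[s,t]}$ is $(1,2)$-stochastic exactly when all its entries are non-negative, and the only entries not manifestly in $\{0,1\}$ are those three. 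Their simultaneous non-negativity, namely $\alpha_0(s)\ge 0$, $\beta_0(s)\ge 0$ and $\alpha_0(s)+\beta_0(s)\le 1$ for every $s\in[0,a)$, is clearly equivalent to $\alpha_0,\beta_0\in[0,1]$ together with $\alpha_0+\beta_0\le 1$ on $[0,a)$. This already yields the ``only if'' direction and the stochasticity half of the ``if'' direction.

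It then remains to verify \eqref{eD} (equivalently \eqref{KC} for $*_{a_0}$) for \emph{all} $0\le s<\tau<t$. Since $\tau<t$, exactly three regimes occur: (a) $s<\tau<t<a$, (b) $s<\tau<a\le t$, and (c) $a\le\tau<t$. In regime (a) all three of $\M^{[s,\tau]},\M^{[\tau,t]},\M^{[s,t]}$ equal the relevant $\M_1$ and $f(\tau,t)=\alpha^{[\tau,t]}+\beta^{[\tau,t]}+\gamma^{[\tau,t]}=1$; reading the computation that precedes the proposition in reverse, equations \eqref{bst}--\eqref{ust} reduce to $0=0$ (here $b,c,u,v,w\equiv 0$, while $\alpha,\beta,\gamma$ depend only on the first time variable) and \eqref{ast} reduces to $0=0$ as well, while the $k=0$ and $k=2$ layers hold as in the general case since $P_{kn0}^{[\tau,t]}=P_{kn2}^{[\tau,t]}=\delta_{k0}\delta_{n0}$. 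In regime (c) all three matrices equal $\M_0$, $f(\tau,t)=0$, \eqref{ast} reads $1=1+1\cdot 0$, and the remaining equations again read $0=0$. So only regime (b) is genuinely new.

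For regime (b) I would substitute $\M^{[s,\tau]}=\M_1^{[s,\tau]}$, $\M^{[\tau,t]}=\M_0^{[\tau,t]}$ and $\M^{[s,t]}=\M_0^{[s,t]}$: for $\M_0^{[\tau,t]}$ all of $\alpha,\beta,\gamma,b,c,u,v,w$ vanish and $a^{[\tau,t]}=1$, and for $\M_1^{[s,\tau]}$ one has $a^{[s,\tau]}=0$, so $f(\tau,t)=0$, equations \eqref{bst}--\eqref{ust} collapse to $0=0$, and \eqref{ast} collapses to $a^{[s,t]}=1=1+0\cdot 0$, matching $\M_0^{[s,t]}$; the $k=0,2$ layers are unchanged. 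Combined with Lemma~\ref{l2}, this shows that the family \eqref{mc} is a QSP of type $(12|a_0)$ whenever $\alpha_0,\beta_0\in[0,1]$ and $\alpha_0+\beta_0\le 1$, completing the ``if'' direction. The case analysis for \eqref{eD} can alternatively be short-circuited: the matrix $\overline{\M}^{[s,t]}$ obtained from \eqref{cd} is $t$-independent and left stochastic, with rows $(1,0,1),(0,1,0),(0,0,0)$ when $t<a$ and rows $(1,1,1),(0,0,0),(0,0,0)$ when $t\ge a$; both matrices are idempotent and the product of the first with the second equals the second, so $\overline{\M}^{[s,t]}$ satisfies \eqref{KC0}, which is exactly the hypothesis of the \cite{LR} proposition preceding Lemma~\ref{l2}. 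I expect the only real difficulty to be keeping the $3\times 3\times 3$ index structure straight across the piecewise definition of \eqref{mc}; there is no conceptual obstacle.
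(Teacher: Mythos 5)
Your main argument is correct and is essentially the paper's own proof run in the verification direction: the paper obtains \eqref{mc} by solving the system \eqref{ast}--\eqref{ust} under case (c) and reads off the non-negativity constraints, which is exactly what you check, including the cross-regime case $s<\tau<a\le t$ (which the paper treats only implicitly) and the $k=0,2$ layers; the stochasticity analysis coincides. One caution about your proposed ``short-circuit'': it is not a valid proof of \eqref{eD}. Under $*_{a_0}$ the Kolmogorov--Chapman equation reads $P^{[s,t]}_{ijr}=\sum_k P^{[s,\tau]}_{ijk}\,\bar c^{[\tau,t]}_{kr}$, which is strictly stronger than requiring that $\overline{\M}^{[s,t]}$ satisfy \eqref{KC0}; the proposition quoted from \cite{LR} states only the necessary direction (it is how the paper parametrizes candidate solutions), and indeed taking $\overline{\M}^{[s,t]}$ to be the identity matrix, any $t$-dependent cubic family with the correct column sums satisfies \eqref{cd} and \eqref{KC0} while violating \eqref{eD}. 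Since you present this only as an alternative, your proof stands on the direct three-regime verification.
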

 The time behavior of this QSP depends on fixed functions.
But it is  simpler than the previous case. Let us give some interesting interpretations:
\begin{itemize}
\item[-] Start the process at time $s$, then for any $t<a$, the
probabilities $\{P_{101}^{[s,t]},P_{111}^{[s,t]}, P_{121}^{[s,t]}\} =\{\alpha_0(s),\beta_0(s), 1-\alpha_0(s)-\beta_0(s)\}$ are
independent on time $t<a$. While all the other probabilities independent on both $s$ and $t$. Thus the process is stable
for any $t<a$, i.e. until $t=a$.

\item[-] Start the process at time $s$ as soon as $t\geq a$ then the population immediately dies.
This phenomenon reminds a cataclysm (catastrophe): ``everything is going good, good, \dots, died''.
\end{itemize}
\section*{Acknowledgements}

 This work was partially supported by  Agencia Estatal de Investigaci\'on (Spain),
grant MTM2016-79661-P (European FEDER support included, UE)  and by Kazakhstan Ministry of Education and Science, grant 0828/GF4: ``Algebras, close to Lie: cohomologies, identities and deformations''.


\begin{thebibliography}{99}

\bibitem{A} S. Asmussen, Applied probability and queues, 2nd ed., Applications of Mathematics (New York), 51, Stochastic Modelling and Applied Probability, Springer-Verlag, New York, 2003.

\bibitem{CLR} J.M. Casas, M. Ladra, U.A. Rozikov, A chain of evolution algebras, Linear Algebra Appl. 435(4) (2011) 852--870.

\bibitem{D} V.P. Dmitriev, Quantum mechanics as a theory of a non-Markov random process (Russian),
Dokl. Akad. Nauk SSSR 292(5) (1987) 1101--1105.

\bibitem{ht} P. H\"anggi, H. Thomas, Time evolution, correlations,
and linear response of non-Markov processes, Zeitschrift Phys. B
26 (1977) 85--92.

\bibitem{Ja} N. Jacobson, Lie algebras, Interscience Tracts in Pure and Applied Mathematics, no. 10, Interscience
Publishers (a division of John Wiley Sons), New York-London, 1962.

\bibitem{K} N.G. van Kampen, Remarks on non-Markov processes, Braz. J. Phys. 28(2) (1998) 90--96.

\bibitem{LR} M. Ladra, U.A. Rozikov, Flow of finite-dimensional algebras.  J. Algebra 470 (2017) 263--288.

\bibitem{LRl} M. Ladra, U.A. Rozikov, Algebras of cubic matrices,  Linear Multilinear Algebra 65(7) (2017) 1316--1328.

\bibitem{Mak}  V.M. Maksimov, Cubic stochastic matrices and their probability interpretations, Theory Probab. Appl. 41(1) (1996) 55--69.

\bibitem{MG} F. Mukhamedov, N. Ganikhodjaev,  Quantum quadratic operators and processes, Lecture Notes in Mathematics 2133, Springer, Cham, 2015.

\bibitem{ORT} B.A. Omirov, U.A. Rozikov, K.M. Tulenbayev, On real chains of evolution algebras,  Linear Multilinear Algebra 63(3) (2015) 586--600.

\bibitem{RM} U.A. Rozikov, Sh.N. Murodov, Dynamics of two-dimensional evolution algebras,
Lobachevskii J. Math. 34(4) (2013) 344--358.

\bibitem{RM1} U.A. Rozikov, Sh.N. Murodov, Chain of evolution algebras of ``chicken'' population, Linear Algebra Appl. 450 (2014) 186--201.

\bibitem{Sh} A.N. Shiryaev, Probability, 2nd ed., Springer-Verlag, New York, 1996.

\bibitem{SK} Y. Suhov, M. Kelbert, Probability and statistics by example, vol. II,
Markov chains: a primer in random processes and their applications, Cambridge Univ. Press, Cambridge, 2008.

\end{thebibliography}
\end{document}